\def\sideremark#1{\ifvmode\leavevmode\fi\vadjust{\vbox to0pt{\vss% the remark
 \hbox to 0pt{\hskip\hsize\hskip1em%                          will appear only
 \vbox{\hsize2.1cm\tiny\raggedright\pretolerance10000%          on the side
  \noindent #1\hfill}\hss}\vbox to15pt{\vfil}\vss}}}%
\numberwithin{equation}{section}
\newtheorem{theorem}{Theorem}[section]
\newtheorem{proposition}[theorem]{Proposition}
\newtheorem{lemma}[theorem]{Lemma}
\newtheorem{remark}[theorem]{Remark}
\newtheorem{example}[theorem]{Example}
\newtheorem{corollary}[theorem]{Corollary}
\newtheorem{definition}[theorem]{Definition}
\newcommand{\ud}{\mathrm{d}}
\newcommand{\bt}{\begin{theorem}}
\newcommand{\et}{\end{theorem}}
\newcommand{\bl}{\begin{lemma}}
\newcommand{\el}{\end{lemma}}
\newcommand{\bd}{\begin{definition}}
\newcommand{\ed}{\end{definition}}
\newcommand{\bc}{\begin{corollary}}
\newcommand{\ec}{\end{corollary}}
\newcommand{\bp}{\begin{proof}}
\newcommand{\ep}{\end{proof}}
\newcommand{\bx}{\begin{example}}
\newcommand{\ex}{\end{example}}
\newcommand{\bi}{\begin{exercise}}
\newcommand{\ei}{\end{exercise}}
\newcommand{\bo}{\begin{proposition}}
\newcommand{\eo}{\end{proposition}}
\newcommand{\br}{\begin{remark}}
\newcommand{\er}{\end{remark}}
\newcommand{\be}{\begin{equation}}
\newcommand{\ee}{\end{equation}}
\newcommand{\ba}{\begin{align}}
\newcommand{\ea}{\end{align}}
\newcommand{\bn}{\begin{enumerate}}
\newcommand{\en}{\end{enumerate}}
\newcommand{\bg}{\begin{align*}}
\newcommand{\bcs}{\begin{cases}}
\newcommand{\ecs}{\end{cases}}
\newcommand{\bean}{\begin{eqnarray*}}
\newcommand{\eean}{\end{eqnarray*}}
\newtheorem{Lemma}{Lemma}[part]
\newtheorem{Proposition}{Proposition}[part]
\newtheorem{Remark}{Remark}[part]
\newtheorem{Theorem}{Theorem}[part]
\numberwithin{Assumption}{section} \numberwithin{Corollary}{section}
\numberwithin{Definition}{section} \numberwithin{equation}{section}
\numberwithin{Example}{section} \numberwithin{Lemma}{section}
\numberwithin{Proposition}{section} \numberwithin{Remark}{section}
\numberwithin{Theorem}{section}
\renewcommand{\leq}{\leqslant}
\renewcommand{\geq}{\geqslant}
\title[The existence and multiplicity of solutions]{The existence and multiplicity of solutions for general quasi-linear elliptic equations with sub-cubic nonlinearity}
\author[C.~Huang]{Chen Huang}
\author[J.~Zhang]{Jianjun Zhang}
\author[X.~X.~Zhong]{Xuexiu Zhong}
\address[C. Huang]{\newline\indent College of Science
\newline\indent
University of Shanghai for Science and Technology
\newline\indent Shanghai 200093, P.R. China}
\email{\href{mailto:chenhuangmath111@163.com}{chenhuangmath111@163.com}}
\address[J.~Zhang]{\newline\indent College of Mathematics and Statistics
\newline\indent
Chongqing Jiaotong University
\newline\indent
Chongqing 400074, PR China}
\email{\href{mailto:zhangjianjun09@tsinghua.org.cn}{zhangjianjun09@tsinghua.org.cn}}
\address[X.~X.~Zhong]{\newline\indent South China Research Center for Applied Mathematics and Interdisciplinary Studies
\newline\indent
South China Normal University
\newline\indent
Guangzhou 510631, PR China}
\email{\href{mailto:zhongxuexiu1989@163.com}{zhongxuexiu1989@163.com}}
\thanks{C. Huang is supported by Postdoctoral Science Foundation of China (2020M682065). J. Zhang is supported by NSFC(No.11871123). Xuexiu Zhong was supported by the NSFC (No.11801581), Guangdong Basic and Applied Basic Research Foundation (2021A1515010034),Guangzhou Basic and Applied Basic Research Foundation(No.202102020225).}
\subjclass[2010]{35J20; 35J62; 35B45}
\date{\today}
\keywords{Quasilinear elliptic equations,\ Variational methods,\ $L^{\infty}$-estimate,\ Perturbation approachs.}
\begin{document}

\begin{abstract}
We consider the existence and multiplicity of solutions for a class of quasi-linear Schr\"{o}dinger equations which include the modified nonlinear Schr\"{o}dinger equations. A new perturbation approach is used to treat the sub-cubic nonlinearity.

\end{abstract}
\maketitle

\section{ Introduction }
\setcounter{equation}{0}
\setcounter{Assumption}{0} \setcounter{Theorem}{0}
\setcounter{Proposition}{0} \setcounter{Corollary}{0}
\setcounter{Lemma}{0}\setcounter{Remark}{0}
\par

\indent\indent
In this paper, we focus on the existence and multiplicity of solutions for a class of quasi-linear elliptic equations of the form
\begin{equation}\label{eq1.1}
-\sum_{i,j=1}^{N}D_{j}(a_{ij}(u)D_{i}u)+\frac{1}{2}\sum_{i,j=1}^{N}D_{s}a_{ij}(u)D_{i}uD_{j}u+V(x)u=f(u),\ x\in\mathbb{R}^N,
\end{equation}
where $N\geq3$, $D_i=\frac{\partial}{\partial x_i}$, $D_{s}a_{ij}(s)=\frac{d}{ds}a_{ij}(s)$, $V(x)$ satisfies $(V_1)$ and $(V_2)$.\\
$(V_1)$ $V\in C(\mathbb{R}^N)\cap L^{\infty}(\mathbb{R}^N)$, $V(x)=V(|x|)$ and $\inf\limits_{x\in\mathbb{R}^N}V(x)>0$;\\
$(V_2)$ $V$ is weakly differentiable and
$$\left(\frac{1}{2}-\frac{1}{\mu}\right)V(x)-\frac{1}{2}(x\cdot\nabla V(x))\geq 0,\ a.e.\ x\in\mathbb{R}^N,$$
where $\mu$ is given in $(f_{3})$ as follows.\\
Moreover, we assume $f\in C(\mathbb{R},\mathbb{R})$ satisfies\\
$(f_1)$ $\lim\limits_{t\to0}\frac{f(t)}{t}=0$;\\
$(f_2)$ $\limsup\limits_{|t|\to\infty}\frac{|f(t)|}{|t|^{q-1}}<\infty$ for some $q\in(2,\frac{4N}{N-2})$;\\
$(f_3)$ there exists $4>\mu>2$ such that
$$tf(t)\geq\mu F(t)>0\ \mbox{for}\ t\neq0,\ \mbox{where}\ F(t)=\int_{0}^{t}f(s)ds;$$
$(f_4)$ $f$ is odd.\\
For the functions $a_{ij}(i,j=1,\cdots,N)$, we assume the following conditions hold.\\
$(a_1)$ $a_{ij}\in C^{1}(\mathbb{R},\mathbb{R})$, $a_{ij}=a_{ji}$ and $D_{s}a_{ij}$ satisfies the uniformly Lipschitz condition, that is, there exists a constant $C_1$ such that
$$|D_{s}a_{ij}(s_1)-D_{s}a_{ij}(s_2)|\leq C_{1}|s_1-s_2|,\ \mbox{for all}\ s_1,s_2\in\mathbb{R}.$$
$(a_2)$ There exists a constant $C_2>0$ such that
$$C_{2}^{-1}(1+s^2)|\xi|^{2}\leq\sum\limits_{i,j=1}^{N}a_{ij}(s)\xi_{i}\xi_{j}\leq C_{2}(1+s^2)|\xi|^{2},\ \mbox{for all}\ s\in\mathbb{R},\ \xi\in\mathbb{R}^N.$$
$(a_3)$ There exists a positive constant $1\leq C_3<\min\{\frac{3\mu}{2},\frac{\beta}{2}\}$ with $\beta\in(\max\{4,q\},\frac{4N}{N-2})$ such that for all $s\in\mathbb{R}$, $\xi\in\mathbb{R}^N$,
$$\sum_{i,j=1}^{N}a_{ij}(s)\xi_{i}\xi_{j}\leq\sum_{i,j=1}^{N}\left[a_{ij}(s)+\frac{1}{2}D_{s}a_{ij}(s)s\right]\xi_{i}\xi_{j}\leq C_{3}\sum_{i,j=1}^{N}a_{ij}(s)\xi_{i}\xi_{j}.$$
$(a_4)$ $a_{ij}$ is even.

A typical example of $a_{ij}$ is $a_{ij}(s)=(1+2s^{2})\delta_{ij}$, which corresponds with the so-called the modified nonlinear Schr\"{o}dinger equations
\begin{equation}\label{eq1.2}
-\Delta u+V(x)u-\frac{1}{2}\Delta(u^{2})u=f(u),\ x\in\mathbb{R}^N.
\end{equation}
Note that problem (\ref{eq1.2}) appears
in many models from mathematical physics (\cite{Bor1,Bra1,Hasse1,Kur1,Laedke1,Lange1,Makhankov1}).

A major difficulty associated with problem (\ref{eq1.2}) is that the associated natural functional $Q:H^{1}(\mathbb{R}^N)\to\mathbb{R}$ given by
$$Q(u)=\frac{1}{2}\int_{\mathbb{R}^N}\left(|\nabla u|^{2}+V(x)|u|^{2}\right)dx+\int_{\mathbb{R}^N}|\nabla u|^{2}|u|^{2}dx-\int_{\mathbb{R}^N}F(u)dx,$$
is not well defined on $H^{1}(\mathbb{R}^N)$ for $N\geq3$. In fact, due to the presence of the quasi-linear term, there is no natural function space in which $Q$ is both well defined and possesses compactness properties. For this reason, problem (\ref{eq1.2}) has received considerable attention in mathematical analysis in the last decades. To the best of our knowledge, the early existence result for equations of form (\ref{eq1.2}) is due to Poppenberg $et\ al.$ \cite{Pop1}, where the authors proved the existence of standing wave solutions for the quasi-linear Schr\"odinger equation containing strongly singular nonlinearities. Next, Liu $et\ al.$ \cite{Liu1} introduced a utilizing variable substitution (the dual approach) and convert quasi-linear problem (\ref{eq1.2}) into a semi-linear one in an Orlicz space framework. Colin $et\ al.$ in \cite{Col1} adopted a similar method of variable substitution which does not use Orlicz spaces, but the classical Sobolev space $H^{1}(\mathbb{R}^N)$. For further results, we refer to \cite{Ada1,Alv2,Huang1,Huang2,Liu2,LiuX3,LiuZ2,Shen1,Sil1,Wan1}. Furthermore, we remark that in most of the aforementioned references,
the power-like nonlinearity $f(u)=|u|^{p-2}u$ is involved and the restriction $p>4$ plays a crucial role in ensuring the boundedness of (PS) sequences to the energy functional. Recently, Jing $et\ al.$ \cite{Jing2} and Zhang $et\ al.$ \cite{Zhang1} proposed a new variational approach to deal with $p\in(2,\frac{4N}{N-2})$ in a unified way and obtained infinitely many sign-changing solutions.

As the above references, the dual approach has been extensively applied in the modified nonlinear Schr\"{o}dinger equations. But the dual approach seems difficult to be used in the general quasi-linear elliptic problem (\ref{eq1.1}). For this reason, limited work has been done in the general form of the quasi-linear problem (\ref{eq1.1}). In \cite{Liu2}, a least energy sign-changing solution of (\ref{eq1.1}) is obtained via the Nehari manifold method.  In \cite{LiuJ1}, the existence of multi-bump solutions was shown for the quasi-linear elliptic problem (\ref{eq1.1}). Particularly, a truncation argument is introduced so that the problem can be dealt with in $H^{1}(\mathbb{R}^N)$. Since this truncated functional is not of class $C^{1}$, a non-smooth critical point theory was employed. Multiple solutions of (\ref{eq1.1}) were first obtained in \cite{LiuX1}, where a 4-Laplacian perturbation term is involved in (\ref{eq1.1}), so that the associated functionals are well-defined in $W^{1,4}_{0}(\Omega)$. Then the classical critical point theory can be applied to obtain the existence of an infinite sequence of solutions to the perturbed equations, and solutions of the original problem are obtained by passing to the limit. Then in \cite{LiuX2}, the authors obtained multiplicity of sign-changing solutions for a general form of the quasi-linear problem (\ref{eq1.1}). This idea is further developed in \cite{LiuX3}, where the critical case was treated and some new existence results were given. Most of previous results mentioned above yield only with the power-like nonlinearity $f(u)=|u|^{q-2}u$ $(4<q\leq 2\cdot2^{\ast})$. The case $1<q<2$ was investigated in \cite{LiuZ2}, where by using the variants of Clark's theorem, problem (\ref{eq1.1}) has a sequence of solutions with $L^{\infty}$-norms tending to zero. But for the case $2<q<4$, less results are known. By using the perturbation approach and the invariant sets approach, in \cite{Jing1}  Jing $et\ al.$  showed the structure of solutions depends keenly upon the parameter $\lambda$. For $2<q<4$ and $\lambda$ large enough, they show that the following equation
$$
-\sum_{i,j=1}^{N}D_{j}(a_{ij}(x,u)D_{i}u)+\frac{1}{2}\sum_{i,j=1}^{N}D_{s}a_{ij}(x,u)D_{i}uD_{j}u=\lambda|u|^{q-2}u,\ x\in\Omega
$$
has multiple solutions in $H^{1}_{0}(\Omega)$. However no further information was shown on the sign of solutions obtained in \cite{Jing1}.  In \cite{LiuJ2}, as $\lambda$ becomes larger and larger, Liu $et\ al.$ proved the existence of more and more sign-changing solutions with positive or negative energies.

Motivated by the works described above, we are interested in quasi-linear elliptic problems of the form (\ref{eq1.1}) involving sub-cubic nonlinearities without the parameter $\lambda$. Our paper here provides a different convergence theorem for constructing solutions if $f(u)$ satisfies the sub-cubic growth.

Now, we outline our idea and approach. Formally, equation (\ref{eq1.1}) is called the Euler-Lagrange equation of the functional
\begin{equation}\label{eq1.3}
I(u)=\frac{1}{2}\int_{\mathbb{R}^N}\sum\limits_{i,j=1}^{N}a_{ij}(u)D_{i}uD_{j}u\ud x+\frac{1}{2}\int_{\mathbb{R}^N}V(x)|u|^{2}\ud x-\int_{\mathbb{R}^N}F(u)\ud x.
\end{equation}
A function $u\in H^{1}(\mathbb{R}^N)\cap L^{\infty}(\mathbb{R}^N)$ is called a critical point of $I$ if $u$ satisfies $\langle I'(u),\varphi\rangle=0$ for all $\varphi\in C^{\infty}_{0}(\mathbb{R}^N)$, that is, $u$ is also a weak solution of (\ref{eq1.1}). Notice that the functional $I$ is not differentiable in $H^{1}(\mathbb{R}^N)$. In \cite{LiuX1,LiuX2,LiuX3} authors introduced a $p$-Laplacian perturbation method to deal with the quasi-linear problem (\ref{eq1.1}) involving a super-cubic nonlinearity. In \cite{LiuX1} they considered the functional
\begin{equation*}
\begin{aligned}
\widetilde{I}_{\lambda}(u)&=\frac{\lambda}{4}\int_{\Omega}|Du|^{4}\ud x+\frac{1}{2}\int_{\Omega}\sum\limits_{i,j=1}^{N}a_{ij}(u)D_{i}uD_{j}u\ud x-\frac{1}{q}\int_{\Omega}|u|^{q}\ud x,\ u\in W^{1,4}_{0}(\Omega),
\end{aligned}
\end{equation*}
here $f(t)=|t|^{q-2}t$ ($4<q<\frac{4N}{N-2}$). In order to obtain sign-changing solutions, in \cite{LiuX2} they considered the functional
\begin{equation*}
\begin{aligned}
\widetilde{I}_{\lambda}(u)&=\frac{\lambda}{p}\int_{\Omega}|Du|^{p}\ud x+\frac{\lambda}{p}\int_{\Omega}|Du|^{p-2}|u|^{2}\ud x+\frac{1}{2}\int_{\Omega}\sum\limits_{i,j=1}^{N}a_{ij}(u)D_{i}uD_{j}u\ud x\\
&\indent-\frac{1}{q}\int_{\Omega}|u|^{q}\ud x,\ u\in W^{1,p}_{0}(\Omega),
\end{aligned}
\end{equation*}
where $4<p<q<\frac{4N}{N-2}$. In both cases the critical points of the perturbed functionals $\widetilde{I}_{\lambda}$ are used as approximate solutions of the original problem. For the case $f(t)=|t|^{q-2}t$ $(2<q<4)$, the situation is quite different. Since the perturbed functional does not enjoy a linking structure. As a result, the minimax argument can not be used directly. Following the idea of \cite{LiuZ3,LiuZ4,Zhang1}, we consider a family of perturbed functionals, for $0<\lambda\leq1$ and $u\in E$,
\begin{equation}\label{eq1.4}
\begin{aligned}
I_{\lambda}(u)&=\frac{\lambda}{4}\int_{\mathbb{R}^N}\left(|Du|^{4}+|u|^{4}\right)\ud x+\frac{1}{2}\int_{\mathbb{R}^N}\sum\limits_{i,j=1}^{N}a_{ij}(u)D_{i}uD_{j}u\ud x+\frac{1}{2}\int_{\mathbb{R}^N}V(x)|u|^{2}\ud x\\
&\indent-\int_{\mathbb{R}^N}F(u)\ud x+\frac{\lambda}{2(1+\alpha)}\left(\int_{\mathbb{R}^N}|u|^{2}\ud x\right)^{1+\alpha}-\frac{\lambda}{r}\int_{\mathbb{R}^N}|u|^{r}\ud x,
\end{aligned}
\end{equation}
where $\max\{4,q\}<\beta<r<\frac{4N}{N-2}$ and $0<\alpha<\frac{\mu-2}{\mu N+2}$.
Here $E:=W^{1,4}(\mathbb{R}^N)\cap H_{r}^{1}(\mathbb{R}^N)$,
$$W^{1,4}(\mathbb{R}^N)=\left\{u\in L^4(\mathbb{R}^N):\int_{\mathbb{R}^N}|Du|^{4}\ud x<+\infty\right\}$$
with the norm
$$\|u\|_{4}=\left(\int_{\mathbb{R}^N}(|Du|^{4}+u^{4})\ud x\right)^{\frac{1}{4}},$$
and
$$H^{1}_{r}(\mathbb{R}^N)=\{u\in H^{1}(\mathbb{R}^N):u(x)=u(|x|)\},$$
which is a Hilbert space endowed with the norm
$$\|u\|_{2}=\left(\int_{\mathbb{R}^N}(|Du|^{2}+u^{2})\ud x\right)^{\frac{1}{2}}.$$
Denote $\|u\|$ as the norm in $E$, then
$$\|u\|=\|u\|_{2}+\|u\|_{4}.$$
It is easy to know that $I_{\lambda}$ is a $C^{1}$ functional defined on $E$, and
\begin{equation*}
\begin{aligned}
\langle I_{\lambda}'(u),\varphi\rangle&=\lambda\int_{\mathbb{R}^N}\left(|Du|^{2}DuD\varphi+u^{3}\varphi\right)\ud x+\int_{\mathbb{R}^N}\sum\limits_{i,j=1}^{N}a_{ij}(u)D_{i}uD_{j}\varphi \ud x\\
&\indent+\frac{1}{2}\int_{\mathbb{R}^N}\sum\limits_{i,j=1}^{N}D_{s}a_{ij}(u)D_{i}uD_{j}u\varphi \ud x+\int_{\mathbb{R}^N}V(x)u\varphi \ud x\\
&\indent-\int_{\mathbb{R}^N}f(u)\varphi \ud x+\lambda\left(\int_{\mathbb{R}^N}u^{2}\ud x\right)^{\alpha}\int_{\mathbb{R}^N}u\varphi \ud x-\lambda\int_{\mathbb{R}^N}|u|^{r-2}u\varphi \ud x,
\end{aligned}
\end{equation*}
for all $\varphi\in E$.
\begin{Remark}\label{rem1.1}
Motivated by \cite{LiuX1} and \cite{LiuZ4}, thanks to the three perturbation terms, we can overcome the following three main difficulties.\\
$(a)$ Adding $4$-Laplacian perturbation to overcome the non-differentiability of the functional $I$ in $H^{1}(\mathbb{R}^N)$.\\
$(b)$ Using a higher order term $\frac{\lambda}{r}\int_{\mathbb{R}^N}|u|^{r}\ud x$ with $r>4$ to recover the linking structure.\\
$(c)$ Since the case $2<\mu<4$, without the classical Ambrosetti-Rabinowitz condition for the quasi-linear elliptic problem (\ref{eq1.1}), it seems tough to obtain the boundedness of Palais-Smale sequences. To overcome this obstacle, we give a higher order term $\frac{\lambda}{r}\int_{\mathbb{R}^N}|u|^{r}\ud x$ with $r>4$ and another perturbation term $\frac{\lambda}{2(1+\alpha)}\left(\int_{\mathbb{R}^N}u^{2}\ud x\right)^{1+\alpha}$ in the associated natural functional.
\end{Remark}

In the next section, we establish appropriate estimates, that is to say, the critical points of $I_{\lambda}$ converge to ones of $I$ as $\lambda\to0^{+}$. Now, we are ready to state our main results.
\begin{Theorem}\label{thm1.1}
Under the assumptions $(V_1)$, $(V_2)$, $(a_1)$-$(a_3)$ and $(f_1)$-$(f_3)$, problem (\ref{eq1.1}) has a positive solution $u\in H_{r}^{1}(\mathbb{R}^{N})\cap L^{\infty}(\mathbb{R}^N)$.
\end{Theorem}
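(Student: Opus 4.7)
The plan is to construct, for each $\lambda\in(0,1]$, a positive mountain pass critical point $u_\lambda\in E=W^{1,4}(\mathbb{R}^N)\cap H^{1}_{r}(\mathbb{R}^N)$ of the perturbed functional $I_\lambda$, to obtain bounds for $u_\lambda$ that are uniform in $\lambda$, and then to pass to the limit $\lambda\to 0^+$ to recover a solution of (\ref{eq1.1}). Working in the radial class restores the compact embedding $H^{1}_{r}(\mathbb{R}^N)\hookrightarrow L^{s}(\mathbb{R}^N)$ for $s\in(2,\tfrac{2N}{N-2})$, compensating for the loss of compactness on $\mathbb{R}^N$. To enforce positivity I first replace $f$ by $\widetilde{f}(t)=f(t)$ for $t\geq 0$ and $\widetilde{f}(t)=0$ for $t<0$; the hypotheses $(f_1)$--$(f_3)$ persist, and the strong maximum principle applied after rewriting (\ref{eq1.1}) in divergence form with locally bounded coefficients upgrades any nontrivial nonnegative solution of the truncated problem to a strictly positive solution of the original one.

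The first step is the mountain pass geometry of $I_\lambda$. Using $(f_1)$, $(f_2)$, $(V_1)$ and the lower bound in $(a_2)$, there exist $\rho,\kappa>0$ independent of $\lambda\in(0,1]$ with $I_\lambda(u)\geq\kappa$ whenever $\|u\|=\rho$. Fixing a single nonnegative radial $\varphi\in C^{\infty}_{0}(\mathbb{R}^N)$, the upper bound in $(a_2)$ combined with the term $-\frac{\lambda}{r}\int_{\mathbb{R}^N}|u|^{r}\,dx$ with $r>4$ forces $I_\lambda(t\varphi)\to-\infty$ as $t\to\infty$, uniformly for $\lambda\in(0,1]$. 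The resulting mountain pass value $c_\lambda$ then satisfies $\kappa\leq c_\lambda\leq C$ with $C$ independent of $\lambda$.

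The decisive step is uniform boundedness of Palais--Smale sequences for $I_\lambda$ and of the family $\{u_\lambda\}$ itself. Since $\mu<4$, the classical Ambrosetti--Rabinowitz trick fails, and the three perturbations in (\ref{eq1.4}) are designed precisely so that a weighted combination $I_\lambda(u_n)-\theta\langle I_\lambda'(u_n),u_n\rangle$ becomes coercive for a suitable $\theta\in(1/\beta,1/2)$. Invoking $(f_3)$ for the $F$ versus $f(u)u$ balance, $(V_2)$ for the potential, and the two-sided bound $(a_3)$ with $C_{3}<\beta/2$ for the quasi-linear quadratic form, each structural piece contributes non-negatively; the $\frac{\lambda}{r}\int|u|^{r}$-term with $r>\beta>\max\{4,q\}$ absorbs the $(f_2)$-growth of $F$; and the smallness $\alpha<\frac{\mu-2}{\mu N+2}$ is exactly what a Gagliardo--Nirenberg interpolation needs to absorb the $(\int u^{2})^{1+\alpha}$ contribution into $\|u_n\|_{2}$. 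The $4$-Laplacian term then controls $\|u_n\|_{4}$, so $\|u_n\|_E\leq C$. Compactness follows from the radial compact embedding together with a Boccardo--Murat style argument giving a.e.\ convergence of $\nabla u_n$, which passes the monotone $4$-Laplacian and the quasi-linear principal part to the limit; the mountain pass theorem then produces $u_\lambda\in E\setminus\{0\}$ with $I_\lambda(u_\lambda)=c_\lambda$ and $I_\lambda'(u_\lambda)=0$.

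The final step is the passage $\lambda\to 0^+$. The same weighted estimate applied to $u_\lambda$ gives $\|u_\lambda\|_{2}\leq C$ and $\lambda(\|Du_\lambda\|_{4}^{4}+\|u_\lambda\|_{4}^{4})\to 0$. A Moser iteration, made feasible by $(a_2)$ which lets us treat the principal part of (\ref{eq1.1}) as a uniformly elliptic quasi-linear operator with coefficients comparable to $1+u^{2}$, delivers a uniform bound $\|u_\lambda\|_{\infty}\leq C$ depending only on $\|u_\lambda\|_{2}$. Up to a subsequence, $u_\lambda\rightharpoonup u$ in $H^{1}_{r}$, strongly in $L^{s}$ for $s\in(2,\tfrac{2N}{N-2})$, and pointwise a.e.; the perturbation integrals all vanish in the limit thanks to the factor $\lambda$ combined with interpolation between the uniform $L^\infty$ and $L^{2}$ bounds. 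The $L^\infty$ bound keeps $a_{ij}(u_\lambda)$ and $D_{s}a_{ij}(u_\lambda)$ bounded by $(a_1)$--$(a_2)$, so after extracting a.e.\ convergence of $\nabla u_\lambda$ the quasi-linear terms pass to the limit against any $\varphi\in C^{\infty}_{0}(\mathbb{R}^N)$, producing $u\in H^{1}_{r}\cap L^\infty$ solving (\ref{eq1.1}) weakly. Nontriviality follows from $I(u)=\lim_{\lambda\to 0^+}I_\lambda(u_\lambda)\geq\kappa>0$, and $u>0$ by the strong maximum principle. The main obstacle throughout is the third step: in the sub-cubic regime $\mu<4$ the usual coercivity arguments fail, and only the delicate numerical balance $\max\{4,q\}<\beta<r<\frac{4N}{N-2}$, $C_{3}<\min\{3\mu/2,\beta/2\}$, $\alpha<\frac{\mu-2}{\mu N+2}$ built into $I_\lambda$ rescues the proof.
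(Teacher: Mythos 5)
Your overall architecture (perturb, get a mountain pass critical point $u_\lambda$ of $I_\lambda$, obtain $\lambda$-independent bounds, let $\lambda\to0^+$) matches the paper, and your treatment of positivity and of the radial compact embedding is fine. But there are two genuine gaps at exactly the step you yourself identify as decisive. First, the uniform-in-$\lambda$ a priori bound on $u_\lambda$ cannot come from a weighted combination $I_\lambda(u)-\theta\langle I_\lambda'(u),u\rangle$ alone. If $\theta\geq 1/\mu$ (needed so that $(f_3)$ makes $\theta f(u)u-F(u)\geq0$), then since $\mu<4$ the coefficient $(\theta-\tfrac14)\lambda$ of $\int|Du|^4$ is negative and nothing controls it; if $\theta<1/4$, the deficit in $\theta f(u)u-F(u)$ is of order $|u|^q$ and the only terms that could absorb it all carry the factor $\lambda$, so the resulting bound degenerates as $\lambda\to0^+$. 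The paper resolves this by bringing in a \emph{third} relation, the Pohozaev-type inequality of Lemma \ref{lem2.0}, and taking the combination $(N+1)I_\lambda(u)-\tfrac1\mu\langle I_\lambda'(u),u\rangle+(\text{Pohozaev})$; this is precisely where $(V_2)$, the constant $3\mu/2$ in $(a_3)$, and the bound $\alpha<\frac{\mu-2}{\mu N+2}$ enter. You cite $(V_2)$ but never derive or use a Pohozaev identity, and your proposed mechanism (Gagliardo--Nirenberg absorbing the $(\int u^2)^{1+\alpha}$ term) is not a substitute --- that term is in fact a \emph{good} term, used via $t^{1+\alpha}\geq A_1t-A_2$ to dominate other contributions.

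Second, your claim that $I_\lambda(t\varphi)\to-\infty$ \emph{uniformly} in $\lambda\in(0,1]$ along the straight ray is false: since $\mu<4$, the term $-\int F(t\varphi)\sim -t^\mu$ cannot beat the $+Ct^4$ coming from $(a_2)$, so the only higher-order negative term is $-\frac{\lambda t^r}{r}\int|\varphi|^r$, and $\sup_{t>0}I_\lambda(t\varphi)\to+\infty$ as $\lambda\to0^+$. Your uniform upper bound $c_\lambda\leq C$ therefore has no justification. The paper's Lemma \ref{lem3.2} instead uses the scaled path $t\mapsto t\phi(x/t)$, along which $-t^N\int F(t\phi)\leq -Ct^{N+\mu}$ dominates the $t^{N+2}$-order quasilinear and potential terms independently of $\lambda$, while the $\lambda$-dependent part is nonpositive for large $t$. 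A lesser but still real issue is the final limit passage: almost-everywhere convergence of $\nabla u_\lambda$ does not let you pass to the limit in the gradient-quadratic term $\tfrac12\int D_sa_{ij}(u_\lambda)D_iu_\lambda D_ju_\lambda\,\varphi$; the paper tests with $\varphi e^{\mp Hu_\lambda}$ so that this term acquires a sign and Fatou's lemma together with the lower semicontinuity of Lemma \ref{lem2.3} applies, yielding the two one-sided inequalities that combine to \eqref{eq2.11}.
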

\begin{Theorem}\label{thm1.2}
Suppose that conditions $(V_1)$, $(V_2)$, $(a_1)$-$(a_4)$ and $(f_1)$-$(f_4)$ hold, then for any $\lambda\in(0,1],$ $I_{\lambda}$ has a sequence of critical points $\{u_{\lambda,j}\}_{j=1}^{\infty}$. Moreover, as $\lambda\to0^{+}$, $u_{\lambda,j}$ converges to a solution $u_{0,j}$ of problem (\ref{eq1.1}) with $I(u_{0,j})\to\infty$ as $j\to\infty$.
\end{Theorem}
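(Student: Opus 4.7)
The plan is to apply a symmetric minimax principle (fountain-type theorem) to the $C^{1}$ perturbed functional $I_{\lambda}$ on the Banach space $E$ for each fixed $\lambda\in(0,1]$, producing an unbounded sequence of critical values, and then pass to the limit $\lambda\to 0^{+}$ via the convergence result announced at the end of Section~1. The symmetry needed for the even-functional minimax is guaranteed by $(a_{4})$ and $(f_{4})$, which together make $I_{\lambda}$ even on $E$.

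First I would verify the Palais--Smale condition for $I_{\lambda}$ at every level. Given $I_{\lambda}(u_{n})\to c$ and $I_{\lambda}'(u_{n})\to 0$ in $E^{*}$, I would compute $\mu I_{\lambda}(u_{n})-\langle I_{\lambda}'(u_{n}),u_{n}\rangle$ and exploit $(V_{2})$, $(a_{3})$, and $(f_{3})$. The precise choices $r>\max\{4,q\}$ and $0<\alpha<(\mu-2)/(\mu N+2)$ are tailored so that the perturbation pieces $\tfrac{\lambda}{4}\int(|Du|^{4}+|u|^{4})\,\ud x$ and $\tfrac{\lambda}{2(1+\alpha)}(\int u^{2}\,\ud x)^{1+\alpha}$ absorb the deficit left by the weak Ambrosetti--Rabinowitz condition ($2<\mu<4$), yielding uniform $E$-bounds on $\{u_{n}\}$ for each fixed $\lambda$. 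Strong convergence then follows from the compact embedding $H^{1}_{r}(\mathbb{R}^{N})\hookrightarrow L^{s}(\mathbb{R}^{N})$ for $2<s<2^{*}$, the reflexivity of $W^{1,4}$, and the strict monotonicity of the $4$-Laplacian combined with the principal part $\sum a_{ij}(u)D_{i}uD_{j}u$ (which is coercive in $|\nabla u|$ thanks to $(a_{2})$).

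Next I would set up the fountain scheme. Choose an orthonormal basis $\{e_{k}\}$ of $H^{1}_{r}(\mathbb{R}^{N})$ lying in $E\cap C^{\infty}_{c}(\mathbb{R}^{N})$, set $Y_{k}=\mathrm{span}\{e_{1},\dots,e_{k}\}$ and $Z_{k}=\overline{\mathrm{span}\{e_{k},e_{k+1},\dots\}}$. On the finite-dimensional space $Y_{k}$ all norms are equivalent, so the term $-\tfrac{\lambda}{r}\int|u|^{r}\,\ud x$ with $r>\max\{4,q\}$ together with $(f_{3})$ forces $I_{\lambda}(u)\to-\infty$ as $\|u\|\to\infty$. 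On $Z_{k}$, Sobolev interpolation combined with $(f_{1})$--$(f_{2})$ yields
\[
\inf_{u\in Z_{k},\,\|u\|=r_{k}}I_{\lambda}(u)\geq b_{k}\to\infty \qquad (k\to\infty),
\]
for suitable $r_{k}$ that can be chosen independent of $\lambda\in(0,1]$. The fountain theorem then produces critical points $u_{\lambda,j}$ of $I_{\lambda}$ at levels
\[
c_{\lambda,j}=\inf_{\gamma\in\Gamma_{j}}\max_{u\in B_{j}}I_{\lambda}(\gamma(u))\in[b_{j},C_{j}],
\]
where the upper bound $C_{j}$ is obtained by evaluating $I_{\lambda}$ on a fixed compact set contained in $Y_{j}$ on which the two $\lambda$-dependent perturbations vanish uniformly as $\lambda\to 0^{+}$, so that $C_{j}$ can be taken independent of $\lambda$.

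Finally, for each fixed $j$ I would invoke the convergence theorem of Section~2, which extracts a weak limit and upgrades it to a strong one using the $L^{\infty}$-estimate alluded to in the keywords, concluding that $u_{\lambda,j}\to u_{0,j}$ in $H^{1}_{r}(\mathbb{R}^{N})\cap L^{\infty}(\mathbb{R}^{N})$ along a subsequence, with $u_{0,j}$ a weak solution of (\ref{eq1.1}) satisfying $I(u_{0,j})=\lim_{\lambda\to 0^{+}}c_{\lambda,j}\geq b_{j}$; since $b_{j}\to\infty$, this gives $I(u_{0,j})\to\infty$ as required. I expect the main obstacle to be precisely controlling the $L^{\infty}$-norm and the quadratic-gradient quasi-linear term $\sum D_{s}a_{ij}(u)D_{i}uD_{j}u$ uniformly in $\lambda$ in order to pass to the limit inside the nondifferentiable limit functional $I$; this seems to require a Moser-type iteration adapted to the quasi-linear structure, together with the compactness afforded by radial symmetry, and constitutes exactly the technical content deferred to the next section.
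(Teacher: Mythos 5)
Your overall architecture matches the paper's: verify Palais--Smale for the perturbed even functional $I_{\lambda}$, run a symmetric minimax to get critical points at levels bounded above and below uniformly in $\lambda$, and pass to the limit with the Section~2 convergence result (Proposition \ref{pro2.1}), which is indeed where the $L^{\infty}$-bound via Moser iteration and the exponential test-function trick live. Two of your steps, however, diverge from (or fall short of) what the paper actually does, and one of them is a genuine gap.

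First, in the Palais--Smale verification you propose to compute $\mu I_{\lambda}(u_{n})-\langle I_{\lambda}'(u_{n}),u_{n}\rangle$. With $2<\mu<4$ this puts the coefficient $\frac{\mu}{4}-1<0$ in front of $\lambda\int_{\mathbb{R}^N}(|Du_{n}|^{4}+u_{n}^{4})\,\ud x$, and nothing else in the expression controls $\int|Du_{n}|^{4}\,\ud x$; moreover $(a_3)$ only guarantees positivity of the quasi-linear block for multipliers $\frac{1}{\beta}$ with $C_{3}<\frac{\beta}{2}$. The paper's Lemma \ref{lem3.1} instead uses $I_{\lambda}(u_{n})-\frac{1}{\beta}\langle I_{\lambda}'(u_{n}),u_{n}\rangle$ with $\max\{4,q\}<\beta<r$, accepting that $\frac{1}{\beta}f(u)u-F(u)$ is no longer nonnegative and absorbing it through $(f_1)$--$(f_2)$ into the terms $\left(\frac{1}{\beta}-\frac{1}{r}\right)\lambda|u|^{r}+\left(\frac{1}{2(1+\alpha)}-\frac{1}{\beta}\right)\lambda A_{1}u^{2}-C|u|^{q}$. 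Your multiplier choice, as stated, does not close. (Also, $(V_2)$ plays no role here; it enters only in the Pohozaev inequality of Lemma \ref{lem2.1}.)

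Second, and more substantively: you claim the fountain lower bounds $b_{k}\to\infty$ directly from Sobolev interpolation on the tail spaces $Z_{k}$, and you then read off $I(u_{0,j})\geq b_{j}\to\infty$. The paper does \emph{not} obtain divergent lower bounds from the linking geometry; the intersection lemma in Lemma \ref{lem4.1} only yields $c_{j}(\lambda)\geq C\rho_{n}^{2}>0$, a positivity statement. The divergence $I(u_{0,j})\to\infty$ is proved separately, by minorizing $I_{\lambda}$ with the $\lambda$-independent functional $\widetilde{I}(u)=\frac{1}{2}\int(1+2u^{2})|Du|^{2}\,\ud x+\frac{1}{4}\int V_{0}u^{2}\,\ud x-C\int|u|^{r}\,\ud x$, whose own minimax values $d^{j}$ over the same classes $\Gamma_{j}$ diverge because $\widetilde{I}$ is a super-cubic model treatable by the dual/change-of-variable method, and then using $c_{\lambda}^{j}\geq d^{j}$. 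Your direct route would require a uniform decay estimate of the form $\sup\{|u|_{r}:u\in Z_{k},\ \|u\|=1\}\to 0$ in the non-Hilbert space $E$ whose natural ``norm'' involves $\int u^{2}|Du|^{2}\,\ud x$; this is not automatic and is precisely the point the comparison-functional argument is designed to avoid. As written, the unsubstantiated claim $b_{k}\to\infty$ (uniformly in $\lambda$) is the missing step in your proof of $I(u_{0,j})\to\infty$.
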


This paper is organized as follows. In Section 2, we establish a convergence result with a sub-cubic nonlinearity. The proofs of the existence of the mountain pass type critical points of (\ref{eq1.1}) is obtained in Sections 3. Finally, Theorem \ref{thm1.2} is proved in Section 4.\\
\indent In what follows, we use $\|\cdot\|$ as the norm in $E$, $\|\cdot\|_{2}$ as the norm in $H^{1}_{r}(\mathbb{R}^N)$, $\|\cdot\|_{4}$ as the norm in $W^{1,4}(\mathbb{R}^N)$ and $|\cdot|_{p}$ as the norm in $L^{p}(\mathbb{R}^N)$ for $2\leq p\leq+\infty$. Define $u^{\pm}=\max\{\pm u,0\}$. $C$ and $C_{i}\ (i=1,2,\cdots)$ denote positive generic constants.

\section{The Perturbed Functionals and Convergence} \setcounter{equation}{0}
\setcounter{Assumption}{0}
\setcounter{Theorem}{0}
\setcounter{Proposition}{0}
\setcounter{Corollary}{0}
\setcounter{Lemma}{0}
\indent\indent In this section, we prove a convergence result of the perturbed functionals which will be used later in obtaining solutions of (\ref{eq1.1}) by passing to the limit as $\lambda\to0^{+}$. To prove the convergence result, we need the following lemmas.
\begin{Lemma}\label{lem2.0}
Let $u$ be a critical point of $I_{\lambda}$ in $E$ for $\lambda\in(0,1]$, then
\begin{equation*}
\begin{aligned}
&\frac{\lambda(4-N)}{4}\int_{\mathbb{R}^{N}}|D u|^{4}\ud x-\frac{\lambda N}{4}\int_{\mathbb{R}^{N}}u^{4}\ud x+\frac{2-N}{2}\int_{\mathbb{R}^N}\sum\limits_{i,j=1}^{N} a_{ij}(u)D_{i}uD_{j}u\ud x\\
&\indent-\frac{1}{2}\int_{\mathbb{R}^N}\left(NV(x)+(x\cdot \nabla V(x))^{+}\right)u^{2}\ud x
+N\int_{\mathbb{R}^N}F(u)\ud x-\frac{N\lambda}{2}\left(\int_{\mathbb{R}^N}u^{2}\ud x\right)^{1+\alpha}\\
&\indent+\frac{N\lambda}{r}\int_{\mathbb{R}^N}|u|^{r}\ud x\leq0.
\end{aligned}
\end{equation*}
\end{Lemma}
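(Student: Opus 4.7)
The strategy is the classical Pohozaev identity adapted to the perturbed functional $I_\lambda$, followed by a pointwise bound on $x\cdot\nabla V$. First, I would note that standard regularity for the Euler--Lagrange equation of $I_\lambda$ applies: the leading term $-\lambda\,\operatorname{div}(|Du|^2 Du)$ together with the uniformly elliptic quasilinear operator from $(a_2)$ gives $u\in C^{1,\alpha}_{\mathrm{loc}}(\R^N)$ and a local second-order integrability, which is sufficient to use $x\cdot\nabla u$ as a multiplier.

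The central step is the Pohozaev identity, derived by scaling. Setting $u_t(x):=u(x/t)$ for $t>0$, a change of variables gives the explicit formula
\begin{align*}
I_\lambda(u_t) &= \frac{\lambda}{4}t^{N-4}\int_{\R^N}|Du|^4\,\ud x + \frac{\lambda}{4}t^N\int_{\R^N}u^4\,\ud x + \frac{t^{N-2}}{2}\int_{\R^N}\sum_{i,j}a_{ij}(u)D_i u D_j u\,\ud x\\
&\quad + \frac{t^N}{2}\int_{\R^N}V(ty)u^2(y)\,\ud y - t^N\int_{\R^N}F(u)\,\ud x\\
&\quad + \frac{\lambda t^{N(1+\alpha)}}{2(1+\alpha)}\Big(\int_{\R^N}u^2\,\ud x\Big)^{1+\alpha} - \frac{\lambda t^N}{r}\int_{\R^N}|u|^r\,\ud x,
\end{align*}
which is smooth in $t$. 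Differentiating at $t=1$ (the $V$-term contributes $\tfrac{N}{2}\int V u^2+\tfrac{1}{2}\int(x\cdot\nabla V)u^2$ thanks to the weak differentiability built into $(V_2)$) produces a quantity $P_\lambda(u)$. In parallel, testing the Euler--Lagrange equation against $\phi_R(x)\,(x\cdot\nabla u)$ with $\phi_R\in C_c^\infty(\R^N)$ increasing to $1$, integrating by parts termwise, and letting $R\to\infty$ (using $u\in L^2\cap L^4\cap L^r$ and $Du\in L^2\cap L^4$ to kill the cutoff error) shows that this same $P_\lambda(u)$ vanishes, yielding the Pohozaev identity
\begin{align*}
&\frac{\lambda(N-4)}{4}\int|Du|^4 + \frac{\lambda N}{4}\int u^4 + \frac{N-2}{2}\int\sum_{i,j}a_{ij}(u)D_i u D_j u + \frac{1}{2}\int(NV+x\cdot\nabla V)u^2\\
&\qquad - N\int F(u) + \frac{\lambda N}{2}\Big(\int u^2\Big)^{1+\alpha} - \frac{\lambda N}{r}\int|u|^r = 0.
\end{align*}

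Multiplying this identity by $-1$ reproduces every term of the claimed inequality, except that the $V$-piece reads $-\tfrac{1}{2}\int(NV+x\cdot\nabla V)u^2$ rather than $-\tfrac{1}{2}\int(NV+(x\cdot\nabla V)^+)u^2$. Since $x\cdot\nabla V\leq (x\cdot\nabla V)^+$ pointwise, one has $-\tfrac{1}{2}(NV+(x\cdot\nabla V)^+)u^2\leq -\tfrac{1}{2}(NV+x\cdot\nabla V)u^2$, so the equality relaxes to the claimed $\leq 0$. The main technical obstacle is the Pohozaev identity itself on the whole space: with $u$ only in $E=W^{1,4}(\R^N)\cap H^1_r(\R^N)$ and $V$ merely weakly differentiable, the termwise integration by parts --- particularly the quasilinear piece involving $a_{ij}$ and the $V$-piece --- must be handled through a combination of a spatial cutoff $\phi_R$ and approximation of $V$ by smooth functions, with the resulting boundary and remainder terms controlled by the integrability of $u$, $Du$, and $|Du|^2 Du$ guaranteed by membership in $E$ and by the Lipschitz control on $D_s a_{ij}$ from $(a_1)$ before sending $R\to\infty$.
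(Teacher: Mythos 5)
Your proposal is correct and follows essentially the same route as the paper: the authors simply record the $W^{2,4}_{\mathrm{loc}}$ regularity of a critical point and then cite the Pohozaev computation as standard (Lemma 2.6 of \cite{Xu1}), whereas you carry out the scaling/cutoff derivation explicitly and finish with the same relaxation $x\cdot\nabla V\leq(x\cdot\nabla V)^{+}$. One small caveat: since $(V_2)$ together with $V\in L^{\infty}$ only controls $(x\cdot\nabla V)^{+}$, the intermediate \emph{identity} containing $\int_{\mathbb{R}^N}(x\cdot\nabla V)u^{2}\,\ud x$ may fail to be well defined (the negative part of $x\cdot\nabla V$ is uncontrolled), so the pointwise bound by the positive part should be applied inside the cutoff integrals before sending $R\to\infty$ --- which is precisely why the lemma is stated as an inequality rather than an identity.
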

\begin{proof}
Since $u$ is critical point of $I_{\lambda}$, by the classical elliptic regularity theorem (see \cite{Gil1}), we get $u\in W^{2,4}_{loc}(\mathbb{R}^N)\cap E$. And then we have the following Pohozaev type inequality, whose proof is standard and can be referred
to Lemma 2.6 in \cite{Xu1} for example.
\begin{equation*}
\begin{aligned}
0&\geq\frac{\lambda(4-N)}{4}\int_{\mathbb{R}^{N}}|D u|^{4}\ud x-\frac{\lambda N}{4}\int_{\mathbb{R}^{N}}u^{4}\ud x+\frac{2-N}{2}\int_{\mathbb{R}^N}\sum\limits_{i,j=1}^{N} a_{ij}(u)D_{i}uD_{j}u\ud x\\
&\indent-\frac{1}{2}\int_{\mathbb{R}^N}\left(NV(x)+(x\cdot\nabla V(x))^{+}\right)u^{2}\ud x
+N\int_{\mathbb{R}^N}F(u)\ud x-\frac{N\lambda}{2}\left(\int_{\mathbb{R}^N}u^{2}\ud x\right)^{1+\alpha}\\
&\indent+\frac{N\lambda}{r}\int_{\mathbb{R}^N}|u|^{r}\ud x.
\end{aligned}
\end{equation*}
In fact, due to condition $(V_2)$, we know that
$$(x\cdot\nabla V(x))^{+}\in L^{\infty}(\mathbb{R}^N).$$
Hence $\int_{\mathbb{R}^N}(x\cdot \nabla V(x))^{+}u^{2}\ud x$ is well defined.
\end{proof}
\begin{Lemma}\label{lem2.1}
Given $M>0$ there exists $C=C(M)>0$ (independent of $\lambda>0$) such that if $u$ is a critical point of $I_{\lambda}$ with $I_{\lambda}(u)\leq M$, then $\lambda\int_{\mathbb{R}^N}\left(|Du|^{4}+u^{4}\right)\ud x\leq C$, $\int_{\mathbb{R}^N}(1+u^{2})|Du|^{2}\ud x\leq C$ and $\int_{\mathbb{R}^N}u^{2}\ud x\leq C$.
\end{Lemma}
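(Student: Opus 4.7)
The three uniform bounds will come from combining the scalar relations available at a critical point $u$ of $I_\lambda$ with $I_\lambda(u) \leq M$: the energy bound, the Euler identity $\langle I_\lambda'(u),u\rangle = 0$, and the Pohozaev-type inequality $P_\lambda(u) \leq 0$ from Lemma~\ref{lem2.0}. Write $A := \int_{\mathbb R^N}\sum a_{ij}(u)D_iuD_ju\,\ud x$ and $B := \int_{\mathbb R^N}\sum[a_{ij}(u)+\tfrac{1}{2}D_s a_{ij}(u)u]D_iuD_ju\,\ud x$, so that $(a_2)$ makes $A$ comparable to $\int(1+u^2)|Du|^2\,\ud x$ and $(a_3)$ gives $A \leq B \leq C_3 A$. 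The strategy is to derive two complementary inequalities among the three quantities $\lambda\int(|Du|^4+u^4)\,\ud x$, $A$, and $\int V u^2\,\ud x$ and then close the loop.

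\textbf{Two estimates.} From $-P_\lambda(u) \geq 0$ one bounds $\int F(u)\,\ud x + \tfrac{\lambda}{r}\int|u|^r\,\ud x$ from above by the remaining Pohozaev terms; plugging this into $I_\lambda(u) \leq M$ and simplifying (the $\int V u^2$ and $\lambda\int u^4$ contributions cancel exactly) yields
\[
\tfrac{1}{N}\lambda\!\int_{\mathbb R^N}|Du|^4\,\ud x + \tfrac{1}{N}A \leq M + \tfrac{1}{2N}\!\int_{\mathbb R^N}(x\cdot\nabla V)^+ u^2\,\ud x + \tfrac{\alpha\lambda}{2(1+\alpha)}\Bigl(\int_{\mathbb R^N} u^2\,\ud x\Bigr)^{1+\alpha}.
\]
Using $(V_2)$ and $\mu > 2$, the inequality $(x\cdot\nabla V)^+ \leq (1-2/\mu)V$ converts the middle term into a multiple of $\int V u^2\,\ud x$. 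Separately, the combination $\mu I_\lambda(u) - \langle I_\lambda'(u),u\rangle \leq \mu M$, processed via $(f_3)$, $B \leq C_3 A$, $\alpha < (\mu-2)/2$ (implied by $\alpha < (\mu-2)/(\mu N+2)$), and $r > \mu$, produces
\[
\tfrac{\mu-2}{2}\!\int_{\mathbb R^N} V u^2\,\ud x \leq \mu M + \tfrac{4-\mu}{4}\,\lambda\!\int_{\mathbb R^N}(|Du|^4+u^4)\,\ud x + \bigl(C_3-\tfrac{\mu}{2}\bigr)A.
\]

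\textbf{Closing the loop and conclusion.} Substituting the second estimate into the first yields an inequality of the shape
\[
\tfrac{1}{N}\Bigl(\lambda\!\int(|Du|^4+u^4)\,\ud x + A\Bigr) \leq C(M) + \tfrac{4-\mu}{4N\mu}\,\lambda\!\int(|Du|^4+u^4)\,\ud x + \tfrac{2C_3-\mu}{2N\mu}\,A + \mathcal N,
\]
where $\mathcal N$ collects the nonlinear $\lambda(\int u^2)^{1+\alpha}$ contribution. The linear coefficients on the right are strictly smaller than those on the left precisely because $\mu > 2$ (so $4-\mu < 4\mu$) and $C_3 < 3\mu/2$; this last inequality is exactly the role of the constant $3\mu/2$ in $(a_3)$. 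The residual term $\mathcal N$ is absorbed by using $\int u^2\,\ud x \leq (\inf V)^{-1}\int V u^2\,\ud x$ together with the quasi-linear Sobolev estimate (via $\int u^2|Du|^2 \leq C A$ one controls $\int|u|^{4N/(N-2)}$, which by interpolation with $L^2$ bounds $(\int u^2)^{1+\alpha}$); here the exponent restriction $\alpha < (\mu-2)/(\mu N+2)$ is exactly what makes the interpolation power permit absorption of $\lambda\mathcal N$ into the left side uniformly in $\lambda \in (0,1]$. After rearrangement one deduces $\lambda\int(|Du|^4+u^4)\,\ud x + A + \int V u^2\,\ud x \leq C(M)$, whence the three asserted bounds follow from $(a_2)$ and $(V_1)$; the bound $\lambda\int u^4\,\ud x \leq C$ (not directly produced by the combination) is then read off from $I_\lambda(u) \leq M$ itself once the other terms are controlled. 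The main obstacle is precisely the coefficient calibration: the thresholds $3\mu/2$ in $(a_3)$ and $(\mu-2)/(\mu N+2)$ on $\alpha$ are tuned so that the linear and nonlinear coefficients on the right are strictly weaker than the corresponding ones on the left, without which the loop would fail to close.
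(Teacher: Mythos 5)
Your skeleton is essentially the paper's: the paper forms the single combination $(N+1)I_\lambda(u)-\tfrac{1}{\mu}\langle I_\lambda'(u),u\rangle+P_\lambda(u)$, which is just a repackaging of your ``first estimate plus $\tfrac{1}{N\mu}$ times the second,'' and your coefficient calibration for the linear terms ($C_3<\tfrac{3\mu}{2}$ giving $\tfrac{3}{2}-\tfrac{C_3}{\mu}>0$ in front of $A$, and $(V_2)$ handling $(x\cdot\nabla V)^+$) is correct and is exactly what the paper checks. The gap is in how you dispose of the two leftover terms, and it is a real gap, not a presentational one.

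First, the $\lambda\int u^4$ term. Your own derivation of the first estimate shows that the $\lambda\int u^4$ contributions cancel exactly, so its left-hand side is $\tfrac{\lambda}{N}\int|Du|^4+\tfrac{1}{N}A$ and \emph{not} $\tfrac{1}{N}\bigl(\lambda\int(|Du|^4+u^4)+A\bigr)$ as written in your ``closing the loop'' display. Meanwhile the substitution imports $+\tfrac{4-\mu}{4N\mu}\lambda\int u^4$ onto the right-hand side with nothing on the left to absorb it; since $\mu<4$ this coefficient is genuinely positive. Your remark that $\lambda\int u^4\leq C$ can be ``read off at the end'' does not help, because the term sits inside the inequality you are trying to close, not after it. Second, the term $\mathcal N=\tfrac{\alpha\lambda}{2(1+\alpha)}(\int u^2)^{1+\alpha}$: the proposed absorption via ``$\int u^2|Du|^2\leq CA$ controls $\int|u|^{4N/(N-2)}$, which by interpolation with $L^2$ bounds $(\int u^2)^{1+\alpha}$'' does not work. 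On $\mathbb{R}^N$ the $L^2$ norm is the lowest exponent in play and cannot be controlled by higher Lebesgue norms, and $(\int u^2)^{1+\alpha}$ is superlinear in the very quantity $\int u^2$ you are bounding, so feeding the second estimate back in produces $X\leq C+cX^{1+\alpha}$, which gives nothing.

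Both defects are repaired by the single elementary device the paper uses, which your proposal omits: do \emph{not} discard the positive contributions $\lambda\bigl(\tfrac{\mu}{2(1+\alpha)}-1\bigr)(\int u^2)^{1+\alpha}$ and $\lambda\bigl(1-\tfrac{\mu}{r}\bigr)\int|u|^r$ coming from the Euler identity. Keeping them, the net coefficient of $(\int u^2)^{1+\alpha}$ in the full combination is $\tfrac{\mu-\alpha\mu N-2(1+\alpha)}{2(1+\alpha)\mu}>0$ precisely because $\alpha<\tfrac{\mu-2}{\mu N+2}$ (so the condition on $\alpha$ plays the role you guessed, but through a sign of a coefficient, not through Sobolev interpolation), and the bad $-\lambda\bigl(\tfrac{1}{\mu}-\tfrac14\bigr)\int u^4$ is then dominated, up to an additive constant $\lambda A_2\leq A_2$, by writing $\bigl(\int u^2\bigr)^{1+\alpha}\geq A_1\int u^2-A_2$ and invoking the pointwise inequality
\begin{equation*}
c_\alpha A_1 t^{2}+\Bigl(\tfrac{1}{\mu}-\tfrac{1}{r}\Bigr)t^{r}-\Bigl(\tfrac{1}{\mu}-\tfrac{1}{4}\Bigr)t^{4}\geq 0,\qquad t\geq 0,
\end{equation*}
valid for $A_1$ large since $2<4<r$. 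With that insertion your two-step bookkeeping closes and coincides with the paper's argument.
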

\begin{proof}
By the definition of $I_{\lambda}$, we obtain
\begin{equation}\label{eq2.1}
\begin{aligned}
M&\geq\frac{\lambda}{4}\int_{\mathbb{R}^N}\left(|Du|^{4}+u^{4}\right)\ud x+\frac{1}{2}\int_{\mathbb{R}^N}\sum\limits_{i,j=1}^{N}a_{ij}(u)D_{i}uD_{j}u\ud x+\frac{1}{2}\int_{\mathbb{R}^N}V(x)u^{2}\ud x\\
&\indent-\int_{\mathbb{R}^N}F(u)\ud x+\frac{\lambda}{2(1+\alpha)}\left(\int_{\mathbb{R}^N}u^{2}\ud x\right)^{1+\alpha}-\frac{\lambda}{r}\int_{\mathbb{R}^N}|u|^{r}\ud x
\end{aligned}
\end{equation}
and
\begin{equation}\label{eq2.2}
\begin{aligned}
0&=\langle I_{\lambda}'(u),u\rangle=\lambda\int_{\mathbb{R}^N}\left(|Du|^{4}+u^{4}\right)\ud x+\int_{\mathbb{R}^N}\sum\limits_{i,j=1}^{N}a_{ij}(u)D_{i}uD_{j}u \ud x\\
&\indent+\frac{1}{2}\int_{\mathbb{R}^N}\sum\limits_{i,j=1}^{N}D_{s}a_{ij}(u)uD_{i}uD_{j}u \ud x+\int_{\mathbb{R}^N}V(x)u^{2} \ud x\\
&\indent-\int_{\mathbb{R}^N}f(u)u\ud x+\lambda\left(\int_{\mathbb{R}^N}u^{2}\ud x\right)^{1+\alpha}-\lambda\int_{\mathbb{R}^N}|u|^{r}\ud x.
\end{aligned}
\end{equation}
Moreover, by Lemma \ref{lem2.0}, we have the following Pohozaev type identity.
\begin{equation}\label{eq2.3}
\begin{aligned}
0\geq&\frac{\lambda(4-N)}{4}\int_{\mathbb{R}^{N}}|D u|^{4}\ud x-\frac{\lambda N}{4}\int_{\mathbb{R}^{N}}u^{4}\ud x+\frac{2-N}{2}\int_{\mathbb{R}^N}\sum\limits_{i,j=1}^{N} a_{ij}(u)D_{i}uD_{j}u\ud x\\
&-\frac{1}{2}\int_{\mathbb{R}^N}\left(NV(x)+(x\cdot\nabla V(x))^{+}\right)u^{2}\ud x
+N\int_{\mathbb{R}^N}F(u)\ud x\\
&-\frac{N\lambda}{2}\left(\int_{\mathbb{R}^N}u^{2}\ud x\right)^{1+\alpha}+\frac{N\lambda}{r}\int_{\mathbb{R}^N}|u|^{r}\ud x.
\end{aligned}
\end{equation}
Multiplying (\ref{eq2.1}), (\ref{eq2.2}) and (\ref{eq2.3}) by $N+1$, $-\frac{1}{\mu}$ and $1$ respectively and adding them up, we get
\begin{equation}\label{eq2.4}
\begin{aligned}
(N+1)M\geq&\lambda\left(\frac{5}{4}-\frac{1}{\mu}\right)\int_{\mathbb{R}^N}|D u|^{4}\ud x+\lambda\left(\frac{1}{4}-\frac{1}{\mu}\right)\int_{\mathbb{R}^N}u^{4}\ud x\\
&+\lambda\left(\frac{\mu-\alpha \mu N-2(1+\alpha)}{2(1+\alpha)\mu}\right)\left(\int_{\mathbb{R}^N}u^{2}\ud x\right)^{1+\alpha}+\lambda\left(\frac{1}{\mu}-\frac{1}{r}\right)\int_{\mathbb{R}^N}|u|^{r}\ud x
\\
&+\left(\frac{1}{2}-\frac{1}{\mu}\right)\int_{\mathbb{R}^N} V(x)u^{2}\ud x
-\frac{1}{2}\int_{\mathbb{R}^N}(x\cdot\nabla V(x))^{+}u^{2}\ud x\\
&+\left(\frac{3}{2}-\frac{1}{\mu}\right)\int_{\mathbb{R}^N}\sum\limits_{i,j=1}^{N} a_{ij}(u)D_{i}uD_{j}u\ud x
\\
&-\frac{1}{2\mu}\int_{\mathbb{R}^N}\sum\limits_{i,j=1}^{N} D_{s}a_{ij}(u)uD_{i}uD_{j}u\ud x.
\end{aligned}
\end{equation}
By conditions $(V_2)$ and $(a_3)$, we get
\begin{equation*}
\begin{aligned}
(N+1)M\geq&\lambda\left(\frac{5}{4}-\frac{1}{\mu}\right)\int_{\mathbb{R}^N}|D u|^{4}\ud x+\lambda\left(\frac{1}{4}-\frac{1}{\mu}\right)\int_{\mathbb{R}^N}u^{4}\ud x\\
&\indent+\lambda\left(\frac{\mu-\alpha\mu N-2(1+\alpha)}{2(1+\alpha)\mu}\right)\left(\int_{\mathbb{R}^N}u^{2}\ud x\right)^{1+\alpha}\\
&\indent+\lambda\left(\frac{1}{\mu}-\frac{1}{r}\right)\int_{\mathbb{R}^N}|u|^{r}\ud x.
\end{aligned}
\end{equation*}
Note that, for any $A_1>0$, we can choose $A_2>0$ such that
$$t^{1+\alpha}>A_{1}t-A_2,\ \mbox{for all}\ t\geq0.$$
Applying this with $t=|u|_{2}^{2}$, then we have
\begin{equation*}
\begin{aligned}
&\indent\lambda\left(\frac{1}{4}-\frac{1}{\mu}\right)\int_{\mathbb{R}^N}u^{4}\ud x+\lambda\left(\frac{\mu-\alpha \mu N-2(1+\alpha)}{2(1+\alpha)\mu}\right)\left(\int_{\mathbb{R}^N}u^{2}\ud x\right)^{1+\alpha}\\
&\indent+\lambda\left(\frac{1}{\mu}-\frac{1}{r}\right)\int_{\mathbb{R}^N}|u|^{r}\ud x\\
&\geq\lambda\left(\frac{1}{4}-\frac{1}{\mu}\right)\int_{\mathbb{R}^N}u^{4}\ud x+\lambda\left(\frac{\mu-\alpha \mu N-2(1+\alpha)}{2(1+\alpha)\mu}\right)\left(A_{1}\int_{\mathbb{R}^N}u^{2}\ud x-A_2\right)\\
&\indent+\lambda\left(\frac{1}{\mu}-\frac{1}{r}\right)\int_{\mathbb{R}^N}|u|^{r}\ud x.
\end{aligned}
\end{equation*}
Since $2<\mu<4<r$, $\frac{1}{\mu}-\frac{1}{r}>0$. From $0<\alpha<\frac{\mu-2}{\mu N+2}$, it implies $\frac{\mu-\alpha \mu N-2(1+\alpha)}{2(1+\alpha)\mu}>0$. Then we take $A_1$ large enough such that the function
\begin{equation*}
\begin{aligned}
\left(\frac{\mu-\alpha \mu N-2(1+\alpha)}{2(1+\alpha)\mu}\right)A_{1}t^{2}+\left(\frac{1}{\mu}-\frac{1}{r}\right)t^{r}-\left(\frac{1}{\mu}-\frac{1}{4}\right)t^{4}\ge0,\,t\ge0.
\end{aligned}
\end{equation*}
Therefore
\begin{equation*}
\begin{aligned}
&\left(\frac{\mu-\alpha \mu N-2(1+\alpha)}{2(1+\alpha)\mu}\right)\left(\int_{\mathbb{R}^N}u^{2}\ud x\right)^{1+\alpha}+\left(\frac{1}{\mu}-\frac{1}{r}\right)\int_{\mathbb{R}^N}|u|^{r}\ud x\\
&\indent-\left(\frac{1}{\mu}-\frac{1}{4}\right)\int_{\mathbb{R}^N}u^{4}\ud x\geq-\left(\frac{\mu-\alpha\mu N-2(1+\alpha)}{2(1+\alpha)\mu}\right)A_{2}.
\end{aligned}
\end{equation*}
Hence, $\lambda\int_{\mathbb{R}^N}|D u|^{4}\ud x\leq C(M)$. From (\ref{eq2.4}), $(V_2)$ and the above estimates, we obtain
\begin{equation*}
\begin{aligned}
(N+1)M\geq&-\lambda\left(\frac{\mu-\alpha\mu N-2(1+\alpha)}{2(1+\alpha)\mu}\right) A_{2}+C_0\int_{\mathbb{R}^N}u^{2}\ud x\\
&\indent+\left(\frac{3}{2}-\frac{1}{\mu}\right)\int_{\mathbb{R}^N}\sum\limits_{i,j=1}^{N} a_{ij}(u)D_{i}uD_{j}u\ud x
\\
&\indent-\frac{1}{2\mu}\int_{\mathbb{R}^N}\sum\limits_{i,j=1}^{N} D_{s}a_{ij}(u)uD_{i}uD_{j}u\ud x.
\end{aligned}
\end{equation*}
From this and condition $(a_3)$, we have $\int_{\mathbb{R}^N}(1+u^{2})|Du|^{2}\ud x\leq C(M)$ and $\int_{\mathbb{R}^N} u^{2}\ud x\leq C(M)$. The interpolation inequality is applied to show $|u|_{p}\leq C(M)$($2\leq p\leq\frac{4N}{N-2}$). Especially, we also have $\int_{\mathbb{R}^N}u^{4}\ud x\leq C(M)$.
\end{proof}
The proof of Theorem \ref{thm1.1} and \ref{thm1.2} is based on the following convergence result for the perturbed variational problem $I_{\lambda}$. Before giving the proof, we show the following lower semi-continuity result.

\begin{Lemma}\label{lem2.3}
Let the symmetric matrix $A(t)\in \mathbb{R}^{N\times N}$ be non-negatively definite and continuous in $t\in\mathbb{R}$. Assume that there exist $u_{0}\in H_r^{1}(\mathbb{R}^N)$ and $\{u_n\}\subset H_r^{1}(\mathbb{R}^N)$ such that $u_n(x)\in[-C,C]$ for some $C>0$(independent of $n$), $u_n\rightharpoonup u_{0}$ in $H_r^{1}(\mathbb{R}^N)$, $u_n(x)\to u_{0}(x)$ a.e. $x\in\mathbb{R}^N$, then
$$
\liminf_{n\rightarrow\infty}\int_{\mathbb{R}^N}(A(u_n)\nabla u_n\cdot \nabla u_n)\ud x\geq\int_{\mathbb{R}^N}(A(u_0)\nabla u_0\cdot \nabla u_0)\ud x.
$$
\end{Lemma}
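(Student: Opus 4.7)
The plan is to exploit the pointwise convexity of the quadratic form $\xi\mapsto A(t)\xi\cdot\xi$, which, since $A(t)$ is symmetric and non-negatively definite, yields the algebraic inequality
$$A(t)(\xi-\eta)\cdot(\xi-\eta)\geq 0\quad\Longleftrightarrow\quad A(t)\xi\cdot\xi\geq 2A(t)\xi\cdot\eta-A(t)\eta\cdot\eta$$
for every $t\in\mathbb{R}$ and $\xi,\eta\in\mathbb{R}^N$. Applying this pointwise with $t=u_n(x)$, $\xi=\nabla u_n(x)$, $\eta=\nabla u_0(x)$ and integrating over $\mathbb{R}^N$ gives
$$\int_{\mathbb{R}^N}A(u_n)\nabla u_n\cdot\nabla u_n\,\ud x\geq 2\int_{\mathbb{R}^N}A(u_n)\nabla u_n\cdot\nabla u_0\,\ud x-\int_{\mathbb{R}^N}A(u_n)\nabla u_0\cdot\nabla u_0\,\ud x.$$
Thus the lemma will follow once I show that both integrals on the right-hand side converge to $\int_{\mathbb{R}^N}A(u_0)\nabla u_0\cdot\nabla u_0\,\ud x$ as $n\to\infty$; taking $\liminf$ then yields $2\int A(u_0)\nabla u_0\cdot\nabla u_0\,\ud x-\int A(u_0)\nabla u_0\cdot\nabla u_0\,\ud x=\int A(u_0)\nabla u_0\cdot\nabla u_0\,\ud x$, which is the desired bound.

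For the \emph{diagonal term} $\int A(u_n)\nabla u_0\cdot\nabla u_0\,\ud x$ I would use that $|u_n(x)|\leq C$ together with continuity of $A$ on $[-C,C]$ gives a uniform bound $|A(u_n(x))|\leq K:=\max_{|t|\leq C}|A(t)|$. Since $u_n(x)\to u_0(x)$ a.e., continuity of $A$ yields pointwise convergence of the integrand, dominated by $K|\nabla u_0|^2\in L^1(\mathbb{R}^N)$, so Lebesgue's theorem settles this term. For the \emph{cross term}, I would rewrite it symmetrically as $\int A(u_n)\nabla u_0\cdot\nabla u_n\,\ud x$ and introduce the vector field $w_n:=A(u_n)\nabla u_0\in L^2(\mathbb{R}^N;\mathbb{R}^N)$. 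The same dominated convergence argument, now with dominating function $K|\nabla u_0|\in L^2$, shows $w_n\to A(u_0)\nabla u_0$ strongly in $L^2$. Coupling this strong convergence with the weak convergence $\nabla u_n\rightharpoonup\nabla u_0$ in $L^2$ (which follows from $u_n\rightharpoonup u_0$ in $H^1_r(\mathbb{R}^N)$), the standard weak-strong pairing produces the required convergence $\int_{\mathbb{R}^N}w_n\cdot\nabla u_n\,\ud x\to\int_{\mathbb{R}^N}A(u_0)\nabla u_0\cdot\nabla u_0\,\ud x$.

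The \emph{main obstacle}, mild as it is, is obtaining the uniform control of $A(u_n)$ needed to justify both dominated convergence and the weak-strong pairing; this is exactly where the hypothesis $|u_n|\leq C$ is indispensable. Without the $L^\infty$ bound on $\{u_n\}$, $A(u_n)$ could fail to produce an $L^2$-dominating function for $w_n$, and the cross-term argument would break down. Given this bound, however, the three ingredients (pointwise convexity, dominated convergence, and weak-strong pairing of $w_n$ with $\nabla u_n$) combine cleanly to deliver the lower semi-continuity.
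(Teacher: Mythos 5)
Your proposal is correct. The algebraic starting point is the same as the paper's: your inequality $A(t)\xi\cdot\xi\geq 2A(t)\xi\cdot\eta-A(t)\eta\cdot\eta$ with $\xi=\nabla u_n$, $\eta=\nabla u_0$ is exactly the paper's decomposition $\int A(u_n)\nabla u_n\cdot\nabla u_n=\int A(u_n)\nabla u_0\cdot\nabla u_0+\int A(u_n)\nabla v_n\cdot\nabla v_n+2\int A(u_n)\nabla u_0\cdot\nabla v_n$ (with $v_n=u_n-u_0$) after discarding the non-negative middle term. Where you genuinely diverge is in the treatment of the two remaining integrals. For the cross term, the paper runs an $\varepsilon$--$\delta$ argument: it uses uniform continuity of $A$ on $[-C,C]$ to compare $A(u_n)$ with $A(u_0)$ on the set $E_n(\delta)=\{|u_n-u_0|\leq\delta\}$, notes $\operatorname{meas}(\mathbb{R}^N\setminus E_n(\delta))\to 0$, and then invokes weak convergence against the fixed field $A(u_0)\nabla u_0$. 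You instead observe that $w_n=A(u_n)\nabla u_0\to A(u_0)\nabla u_0$ strongly in $L^2$ by dominated convergence (the $L^\infty$ bound supplying the dominating function $K|\nabla u_0|$), and then pair strong with weak convergence; this packages the paper's splitting argument into one application of Lebesgue's theorem and is cleaner. For the diagonal term the paper uses only Fatou (sufficient there because that term carries a plus sign in their arrangement), whereas your rearrangement puts it with a minus sign and therefore requires genuine convergence --- which your dominated convergence argument correctly delivers, again relying on the $L^\infty$ bound. Both routes are valid; yours trades the paper's hands-on uniform-continuity estimate for a slightly heavier but more systematic use of the $L^\infty$ hypothesis.
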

\begin{proof}
Let $v_n=u_n-u_0$, then
\begin{align*}
&\int_{\mathbb{R}^N}(A(u_n)\nabla u_n\cdot \nabla u_n)\ud x=\int_{\mathbb{R}^N}(A(u_n)\nabla u_0\cdot \nabla u_0)\ud x\\
&+\int_{\mathbb{R}^N}(A(u_n)\nabla v_n\cdot \nabla v_n)\ud x+2\int_{\mathbb{R}^N}(A(u_n)\nabla u_0\cdot \nabla v_n)\ud x\\
&\ge\int_{\mathbb{R}^N}(A(u_n)\nabla u_0\cdot \nabla u_0)\ud x+2\int_{\mathbb{R}^N}(A(u_n)\nabla v_n\cdot \nabla u_0)\ud x.
\end{align*}
By Fatou's Lemma,
$$
\liminf_{n\rightarrow\infty}\int_{\mathbb{R}^N}(A(u_n)\nabla u_0\cdot \nabla u_0)\ud x\ge\int_{\mathbb{R}^N}(A(u_0)\nabla u_0\cdot \nabla u_0)\ud x.
$$
Now, we only need to show
$$
\lim_{n\rightarrow\infty}\int_{\mathbb{R}^N}(A(u_n)\nabla v_n\cdot \nabla u_0)\ud x=0.
$$
Due to continuity of $A$, for any $\varepsilon>0$, there exists $\delta>0$ such that, for any $\overrightarrow{a},\overrightarrow{b}\in\mathbb{R}^N$, $$\left|([A(t_1)-A(t_2)]\overrightarrow{a}\cdot\overrightarrow{b})\right|\leq \varepsilon|\overrightarrow{a}||\overrightarrow{b}|,\,\,\mbox{if}\,\,t_1,t_2\in[-C,C],\,|t_1-t_2|\leq \delta.$$
Set $E_n(\delta):=\{x\in\mathbb{R}^N\}:|u_n(x)-u_0(x)|\leq\delta$, then up to a subsequence, $meas(\mathbb{R}^N\setminus E_n(\delta))\rightarrow0$ as $n\rightarrow\infty$. It follows that
$$
\lim_{n\rightarrow\infty}\int_{\mathbb{R}^N\setminus E_n(\delta)}(A(u_n)\nabla v_n\cdot \nabla u_0)\ud x=0.
$$
On the one hand, there exists $c>0$(independent of $n,\varepsilon$) such that
\begin{align*}
&\left|\int_{E_n(\delta)}(A(u_n)\nabla v_n\cdot \nabla u_0)\ud x\right|\\
&\leq\left|\int_{E_n(\delta)}([A(u_n)-A(u_0)]\nabla v_n\cdot \nabla u_0)\ud x\right|+\left|\int_{E_n(\delta)}(A(u_0)\nabla v_n\cdot \nabla u_0)\ud x\right|\\
&\leq\varepsilon\int_{\mathbb{R}^N}|\nabla v_n||\nabla u_0|\ud x+\left|\int_{E_n(\delta)}(A(u_0)\nabla v_n\cdot \nabla u_0)\ud x\right|\\
&\leq\left|\int_{E_n(\delta)}(A(u_0)\nabla v_n\cdot \nabla u_0)\ud x\right|+c\varepsilon.
\end{align*}
On the other hand, since $meas(\mathbb{R}^N\setminus E_n(\delta))\rightarrow0$ and $v_n\rightharpoonup 0$ in $H^1(\mathbb{R}^N)$ as $n\rightarrow\infty$, we have
\begin{align*}
&\left|\int_{E_n(\delta)}(A(u_0)\nabla v_n\cdot \nabla u_0)\ud x\right|\\
&\leq\left|\int_{\mathbb{R}^N}(A(u_0)\nabla v_n\cdot \nabla u_0)\ud x\right|+\left|\int_{\mathbb{R}^N\setminus E_n(\delta)}(A(u_0)\nabla v_n\cdot \nabla u_0)\ud x\right|\\
&=o_n(1).
\end{align*}
Then we have
$$
\limsup_{n\rightarrow\infty}\left|\int_{\mathbb{R}^N}(A(u_n)\nabla v_n\cdot \nabla u_0)\ud x\right|\leq c\varepsilon.
$$
Since $\varepsilon$ is arbitrary, the desired result is concluded.
\end{proof}

Motivated
by a similar convergence result in \cite{LiuX2}, we have the following result.
\begin{Proposition}\label{pro2.1}
Suppose that $\lambda_n\to0^{+}$$(n\to\infty)$, $u_n$ is a critical point of $I_{\lambda_n}$ and $I_{\lambda_n}(u_n)\leq C$ for some $C>0$ independent of $n$, then there exist a subsequence of $\{u_n\}$ and a critical point $u_{0}\in H_{r}^{1}(\mathbb{R}^N)\cap L^{\infty}(\mathbb{R}^N)$ of $I$ such that $u_n\to u_{0}$ in $H_{r}^{1}(\mathbb{R}^N)$, $u_{n}Du_{n}\to u_{0}Du_{0}$ in $L^{2}(\mathbb{R}^N)$ and $\lambda_{n}\int_{\mathbb{R}^N}(|Du_n|^{4}+|u_n|^{4})\ud x\to0$, and $I_{\lambda_n}(u_n)\to I(u_{0})$.
\end{Proposition}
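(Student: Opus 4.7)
The plan is to combine the a priori bounds from Lemma \ref{lem2.1}, a uniform $L^{\infty}$ estimate for the sequence, and the lower semi-continuity result of Lemma \ref{lem2.3}, and then to pass to the limit in the Euler equations of $I_{\lambda_n}$ and of $I$ simultaneously. Lemma \ref{lem2.1} applied with $M=C$ yields $\|u_n\|_2\le C$, $\lambda_n\int(|Du_n|^4+u_n^4)\,\ud x\le C$ and $|u_n|_p\le C$ for every $2\le p\le \tfrac{4N}{N-2}$. Hence, along a subsequence, $u_n\rightharpoonup u_0$ in $H_r^1(\mathbb{R}^N)$, $u_n(x)\to u_0(x)$ a.e., and by the compact Strauss embedding $u_n\to u_0$ in $L^p(\mathbb{R}^N)$ for every $2<p<\tfrac{2N}{N-2}$.

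Before passing to the limit in the quasi-linear terms I would establish a uniform bound $\|u_n\|_{\infty}\le M_0$. Testing the Euler equation of $I_{\lambda_n}$ against $|u_n|^{2k}u_n$ for $k\ge 0$ and iterating, the coercive lower bound $C_2^{-1}(1+u_n^2)|Du_n|^2$ from $(a_2)$ controls the quasi-linear term, $(f_1)$--$(f_2)$ control $f(u_n)$, and the superlinear perturbations $\lambda_n|u_n|^{r-2}u_n$ and $\lambda_n|u_n|_2^{2\alpha}u_n$ are harmless because the uniform $L^p$ bounds from Step~1 give control independently of $\lambda_n\in(0,1]$. This Moser bootstrap gives $\|u_n\|_{\infty}\le M_0$, and in particular $u_0\in L^{\infty}(\mathbb{R}^N)$.

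I would carry out the identification of $u_0$ as a critical point of $I$ together with the strong convergence. For $\varphi\in C_0^{\infty}(\mathbb{R}^N)$ the $\lambda_n$-contributions in $\langle I_{\lambda_n}'(u_n),\varphi\rangle=0$ vanish: e.g.\
\begin{equation*}
\lambda_n\Bigl|\int_{\mathbb{R}^N}|Du_n|^{2}Du_n\cdot D\varphi\,\ud x\Bigr|\le \lambda_n^{1/4}\bigl(\lambda_n|Du_n|_4^{4}\bigr)^{3/4}\|D\varphi\|_{4}\to 0,
\end{equation*}
and analogously for the $u_n^{3}\varphi$, $\lambda_n|u_n|_2^{2\alpha}u_n\varphi$ and $\lambda_n|u_n|^{r-2}u_n\varphi$ pieces (using the $L^{\infty}$ bound from Step~2). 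Once strong $H^{1}_{\mathrm{loc}}$ convergence is available, the quasi-linear terms pass to the limit by $(a_1)$, the uniform $L^{\infty}$ bound and dominated convergence, giving $\langle I'(u_0),\varphi\rangle=0$ for all $\varphi\in C_0^{\infty}$, hence $u_0$ is a critical point of $I$. For the strong convergence I compare the identities $\langle I_{\lambda_n}'(u_n),u_n\rangle=0$ and $\langle I'(u_0),u_0\rangle=0$. Lemma \ref{lem2.3} is applied to both matrices $(a_{ij}(s))$ (non-negative by $(a_2)$) and $(a_{ij}(s)+\tfrac12 D_sa_{ij}(s)s)$ (non-negative by the left inequality of $(a_3)$), producing the corresponding lower semi-continuities; Fatou gives $\int V u_0^{2}\le\liminf\int V u_n^{2}$; and dominated convergence (via the uniform $L^{\infty}$ bound) yields $\int f(u_n)u_n\to\int f(u_0)u_0$. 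Substituting all of this into $\langle I_{\lambda_n}'(u_n),u_n\rangle=0=\langle I'(u_0),u_0\rangle$ forces
\begin{equation*}
\limsup_{n\to\infty}\Bigl[\lambda_n\!\int_{\mathbb{R}^N}(|Du_n|^{4}+u_n^{4})\,\ud x\Bigr]\le 0,
\end{equation*}
so this quantity converges to $0$ and every one of the semi-continuity inequalities above is an equality in the limit. Standard arguments then upgrade the latter to strong convergence $u_n\to u_0$ in $H_r^{1}(\mathbb{R}^N)$ and $u_nDu_n\to u_0Du_0$ in $L^{2}(\mathbb{R}^N)^{N}$.

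Finally, energy convergence $I_{\lambda_n}(u_n)\to I(u_0)$ is immediate: the $\lambda_n$-terms vanish by the previous step, while every other term is continuous along the strong convergence just established. \emph{Main obstacle.} The hardest point is Step~3: simultaneously proving strong convergence and vanishing of the perturbation masses $\lambda_n\int(|Du_n|^{4}+u_n^{4})\,\ud x$ without a Palais--Smale compactness statement. The argument hinges on the uniform $L^{\infty}$ bound of Step~2 (so that $a_{ij}(u_n)$ and $D_sa_{ij}(u_n)u_n$ are uniformly bounded and Lemma \ref{lem2.3} applies), on the sign structure in $(a_3)$ (which makes both matrices non-negatively definite), and on the quantitative range $0<\alpha<(\mu-2)/(\mu N+2)$ built into Lemma \ref{lem2.1}.
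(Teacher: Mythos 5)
Your overall architecture matches the paper's (a priori bounds from Lemma \ref{lem2.1}, a uniform $L^{\infty}$ bound by Moser iteration, identification of the limit, then comparison of the two Euler identities to extract strong convergence and the vanishing of the perturbation masses), and your final comparison step is essentially the paper's Step 3. But there is a genuine gap at the pivotal point: how you show $\langle I'(u_0),\varphi\rangle=0$. You write that ``once strong $H^{1}_{\mathrm{loc}}$ convergence is available, the quasi-linear terms pass to the limit,'' but at that stage you only have $u_n\rightharpoonup u_0$ in $H^{1}_r(\mathbb{R}^N)$; the term $\tfrac12\int\sum_{i,j} D_sa_{ij}(u_n)D_iu_nD_ju_n\,\varphi\,\ud x$ is quadratic in $Du_n$ and does not pass to the limit under weak convergence, and no appeal to $(a_1)$, the $L^{\infty}$ bound or dominated convergence repairs that. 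Worse, your proposed source of strong convergence --- comparing $\langle I_{\lambda_n}'(u_n),u_n\rangle=0$ with $\langle I'(u_0),u_0\rangle=0$ --- presupposes that $u_0$ is already known to be a critical point of $I$, so the argument is circular: criticality needs strong convergence, and strong convergence needs criticality.

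The paper breaks this circle with a device you are missing: test $I_{\lambda_n}'(u_n)$ against $\psi=\varphi\exp(-Hu_n)$ with $\varphi\ge0$ and $H$ so large that $-Ha_{ij}(u_n)+\tfrac12 D_sa_{ij}(u_n)$ is negatively definite (possible precisely because of the uniform $L^{\infty}$ bound). The dangerous quadratic gradient term then carries a favorable sign, so only a one-sided inequality is needed in the limit, and that follows from Fatou's lemma and Lemma \ref{lem2.3} under weak convergence alone; repeating with $\exp(+Hu_n)$ gives the reverse inequality, and an approximation $\varphi_n\to\xi\exp(Hu_0)$ in $H^{1}$ recovers the full weak Euler--Lagrange equation (\ref{eq2.11}) for $u_0$. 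Only after that can one take $\xi=u_0$ (again by approximation) and run your comparison argument to obtain $\lambda_n\int_{\mathbb{R}^N}(|Du_n|^{4}+u_n^{4})\,\ud x\to0$, the strong convergences, and $I_{\lambda_n}(u_n)\to I(u_0)$. Without the exponential test function, or some substitute that handles the $D_sa_{ij}$ term under merely weak gradient convergence, your proof does not close. (A minor further point: the compact embedding you invoke should be pushed to $L^{p}$ for $2<p<\tfrac{4N}{N-2}$, using the bound on $\int(1+u_n^{2})|Du_n|^{2}\,\ud x$, since the exponent $r$ exceeds $\tfrac{2N}{N-2}$.)
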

\begin{proof}
By Lemma \ref{lem2.1}, there exist $u_{0}\in H_{r}^{1}(\mathbb{R}^N)$ and $\{u_n\}\subset H_{r}^{1}(\mathbb{R}^N)$ such that $u_n\rightharpoonup u_{0}$ in $H_{r}^{1}(\mathbb{R}^N)$, $u_{n}Du_{n}\rightharpoonup u_{0}Du_{0}$ in $L^{2}(\mathbb{R}^N)$, $u_n\to u_{0}$ in $L^{p}(\mathbb{R}^N)$ with $p\in(2,\frac{4N}{N-2})$, $u_n(x)\to u_{0}(x)$ a.e. $x\in\mathbb{R}^N$ and $|u_n|_{4N/(N-2)}\leq C$.\\
{\bf Step 1.} $\{u_n\}$ is bounded in $L^{\infty}(\mathbb{R}^N)$.\\
First, we claim that there exists $p>4N/(N-2)$ such that $|u_n|_{p}\leq C$. In fact, we have
\begin{equation*}
\begin{aligned}
&\indent\lambda_{n}\int_{\mathbb{R}^N}\left(|Du_n|^{2}Du_{n}D\varphi+u_{n}^{3}\varphi\right)\ud x+\int_{\mathbb{R}^N}\sum\limits_{i,j=1}^{N}a_{ij}(u_{n})D_{i}u_{n}D_{j}\varphi \ud x\\
&+\frac{1}{2}\int_{\mathbb{R}^N}\sum\limits_{i,j=1}^{N}D_{s}a_{ij}(u_{n})D_{i}u_{n}D_{j}u_{n}\varphi \ud x+\int_{\mathbb{R}^N}V(x)u_{n}\varphi \ud x+\lambda_{n}\left(\int_{\mathbb{R}^N}u_{n}^{2}\ud x\right)^{\alpha}\int_{\mathbb{R}^N}u_{n}\varphi \ud x\\
&=\int_{\mathbb{R}^N}f(u_{n})\varphi \ud x+\lambda_{n}\int_{\mathbb{R}^N}|u_{n}|^{r-2}u_{n}\varphi \ud x,\ \mbox{for all}\ \varphi\in E.
\end{aligned}
\end{equation*}
Taking $\varphi=|u_{n}^{T}|^{2(\eta-1)}u_n$ with $\eta>1$, where $u_{n}^{T}=u_n$, if $|u_n|\leq T$; $u_{n}^{T}=T$, if $u_n\geq T$; $u_{n}^{T}=-T$, if $u_n\leq -T$, then we have
\begin{equation}\label{eq2.5}
\begin{aligned}
&\indent\int_{|u_n|\leq T}\sum\limits_{i,j=1}^{N}a_{ij}(u_{n})D_{i}u_{n}D_{j}(|u_{n}^{T}|^{2(\eta-1)})u_n \ud x+\int_{\mathbb{R}^N}\sum\limits_{i,j=1}^{N}a_{ij}(u_{n})D_{i}u_{n}D_{j}u_n |u_{n}^{T}|^{2(\eta-1)}\ud x\\
&+\frac{1}{2}\int_{\mathbb{R}^N}\sum\limits_{i,j=1}^{N}D_{s}a_{ij}(u_{n})D_{i}u_{n}D_{j}u_{n}|u_{n}^{T}|^{2(\eta-1)}u_n \ud x+\int_{\mathbb{R}^N}V(x)u^{2}_{n}|u_{n}^{T}|^{2(\eta-1)}\ud x\\
&\leq\int_{\mathbb{R}^N}f(u_{n})u_{n}|u_{n}^{T}|^{2(\eta-1)} \ud x+\int_{\mathbb{R}^N}|u_{n}|^{r}|u_{n}^{T}|^{2(\eta-1)} \ud x.
\end{aligned}
\end{equation}
The left hand of the above inequality has the following estimates
\begin{equation}\label{eq2.6}
\begin{aligned}
&\indent\int_{|u_n|\leq T}\sum\limits_{i,j=1}^{N}a_{ij}(u_{n})D_{i}u_{n}D_{j}(|u_{n}^{T}|^{2(\eta-1)})u_n \ud x+\int_{\mathbb{R}^N}\sum\limits_{i,j=1}^{N}a_{ij}(u_{n})D_{i}u_{n}D_{j}u_n |u_{n}^{T}|^{2(\eta-1)}\ud x\\
&\indent\indent+\frac{1}{2}\int_{\mathbb{R}^N}\sum\limits_{i,j=1}^{N}D_{s}a_{ij}(u_{n})D_{i}u_{n}D_{j}u_{n}|u_{n}^{T}|^{2(\eta-1)}u_n \ud x+\int_{\mathbb{R}^N}V(x)u^{2}_{n}|u_{n}^{T}|^{2(\eta-1)}\ud x\\
&\geq\int_{|u_n|\leq T}\sum\limits_{i,j=1}^{N}\left((2\eta-1)a_{ij}(u_{n})+\frac{1}{2}D_{s}a_{ij}(u_{n})u_n\right)D_{i}u_{n}D_{j}u_n |u_{n}^{T}|^{2(\eta-1)} \ud x\\
&\indent\indent+\frac{1}{2}\int_{|u_n|\geq T}\sum\limits_{i,j=1}^{N}D_{s}a_{ij}(u_{n})u_n D_{i}u_{n}D_{j}u_{n}T^{2(\eta-1)} \ud x+\int_{\mathbb{R}^N}V(x)u^{2}_{n}|u_{n}^{T}|^{2(\eta-1)}\ud x\\
&\geq C\int_{|u_n|\leq T}(1+2u_{n}^{2})|Du_n|^{2}|u_n^{T}|^{2(\eta-1)}\ud x+\int_{\mathbb{R}^N}V(x)u^{2}_{n}|u_{n}^{T}|^{2(\eta-1)}\ud x\\
&\geq C\int_{|u_n|\leq T}u_{n}^{2}|Du_n|^{2}|u_n^{T}|^{2(\eta-1)}\ud x+\int_{\mathbb{R}^N}V(x)u^{2}_{n}|u_{n}^{T}|^{2(\eta-1)}\ud x.
\end{aligned}
\end{equation}
On the other hand, choose $0<\delta<\frac{1}{2}V_0$. There exists $C_{\delta}>0$ such that
\begin{equation}\label{eq2.7}
\begin{aligned}
\int_{\mathbb{R}^N}f(u_{n})u_{n}|u_{n}^{T}|^{2(\eta-1)} \ud x\leq\delta\int_{\mathbb{R}^N}u^{2}_{n}|u_{n}^{T}|^{2(\eta-1)}\ud x+C_{\delta}\int_{\mathbb{R}^N}|u_{n}|^{r}|u_{n}^{T}|^{2(\eta-1)} \ud x.
\end{aligned}
\end{equation}
By (\ref{eq2.5})-(\ref{eq2.7}), we have
\begin{equation}\label{eq2.8}
\begin{aligned}
C\int_{|u_n|\leq T}u_{n}^{2}|Du_n|^{2}|u_n^{T}|^{2(\eta-1)}\ud x&\leq\int_{\mathbb{R}^N}|u_{n}|^{r}|u_{n}^{T}|^{2(\eta-1)} \ud x\\
&\leq\int_{\mathbb{R}^N}|u_{n}|^{r}|u_{n}|^{2(\eta-1)} \ud x\\
&\leq|u_n|_{\frac{4N}{N-2}}^{r}|u_n|_{\frac{8N(\eta-1)}{4N-r(N-2)}}^{2(\eta-1)}.
\end{aligned}
\end{equation}
Furthermore, by Sobolev's embedding theorems,
\begin{equation}\label{eq2.9}
\begin{aligned}
\int_{|u_n|\leq T}u_{n}^{2}|Du_n|^{2}|u_n^{T}|^{2(\eta-1)}\ud x&=\left(\frac{1}{\eta+1}\right)^{2}\int_{\mathbb{R}^N}|D(|u_{n}^T|^{\eta+1})|^{2}\ud x\\
&\geq C\left(\frac{1}{\eta+1}\right)^{2}\left(\int_{|u_n|\leq T}|u_{n}|^{(\eta+1)\cdot\frac{2N}{N-2}}\ud x\right)^{\frac{N-2}{N}}.
\end{aligned}
\end{equation}
From (\ref{eq2.8}) and (\ref{eq2.9}), we have
$$\left(\int_{|u_n|\leq T}|u_{n}|^{(\eta+1)\cdot\frac{2N}{N-2}}\ud x\right)^{\frac{N-2}{N}}\leq C(\eta+1)^{2}|u_n|_{\frac{8N(\eta-1)}{4N-r(N-2)}}^{2(\eta-1)}.$$
Letting $T\to+\infty$, it follows that
$$|u_n|_{(\eta+1)\cdot\frac{2N}{N-2}}^{2(\eta+1)}\leq C(\eta+1)^{2}|u_n|_{\frac{8N(\eta-1)}{4N-r(N-2)}}^{2(\eta-1)}.$$
Set $\eta_0=\frac{2\cdot2^{\ast}-r}{2}+1>1$, then
$$|u_n|_{(\eta_0+1)\cdot\frac{2N}{N-2}}^{2(\eta_0+1)}\leq C(\eta_{0}+1)^{2}|u_n|_{\frac{4N}{N-2}}^{2(\eta_{0}-1)},$$
thus $$p=(\eta_{0}+1)\cdot\frac{2N}{N-2}>\frac{4N}{N-2}$$ is what we need. Next by using the Moser's iteration, starting from $|u_n|_{4N/(N-2)}<C$, we know that $u_{n}\in L^{\infty}(\mathbb{R}^N)$ and $|u_{n}|_{\infty}\leq C$, where $C$ depends on $\sup_n|u_{n}|_{4N/(N-2)}$ only. These can be proved in the standard way.
\\
{\bf Step 2.} $u_{0}\in H_{r}^{1}(\mathbb{R}^N)\cap L^{\infty}({\mathbb{R}^N})$ is a critical point of $I$.\\
Take $\varphi\in C^{\infty}_{0}(\mathbb{R}^N)$, $\varphi\geq0$ and $\psi=\varphi\exp(-Hu_{n})$ where $H>0$ is large enough such that $-Ha_{ij}(u_{n})+\frac{1}{2}D_{s}a_{ij}(u_{n})$ is negatively definite. From $|u_{n}|_{\infty}\leq C$, we have $\psi\in H_{r}^{1}(\mathbb{R}^N)\cap L^{\infty}(\mathbb{R}^N)$. Since $u_{n}$ is a critical point of (\ref{eq1.4}), then
{\allowdisplaybreaks
\begin{equation*}
\begin{aligned}
0&=\lambda_n\int_{\mathbb{R}^N}\left(|Du_n|^{2}Du_nD\psi+u_n^{3}\psi\right)\ud x+\int_{\mathbb{R}^N}V(x)u_n\psi \ud x\\
&\indent+\int_{\mathbb{R}^N}\sum\limits_{i,j=1}^{N}a_{ij}(u_{n})D_{i}u_{n}D_{j}\psi \ud x+\frac{1}{2}\int_{\mathbb{R}^N}\sum\limits_{i,j=1}^{N}D_{s}a_{ij}(u_{n})D_{i}u_{n}D_{j}u_{n}\psi \ud x\\
&\indent-\int_{\mathbb{R}^N}f(u_n)\psi \ud x+\lambda_n\left(\int_{\mathbb{R}^N}u_{n}^{2}\ud x\right)^{\alpha}\int_{\mathbb{R}^N}u_{n}\psi \ud x-\lambda_n\int_{\mathbb{R}^N}|u_{n}|^{r-2}u_{n}\psi \ud x
\end{aligned}
\end{equation*}
That is,
\begin{equation*}
\begin{aligned}
&0=\lambda_n\int_{\mathbb{R}^N}\left(|Du_n|^{2}Du_nD(\varphi\exp(-Hu_{n}))+u_n^{3}\varphi\exp(-Hu_{n})\right)\ud x\\
&\indent+\int_{\mathbb{R}^N}\sum\limits_{i,j=1}^{N}a_{ij}(u_{n})D_{i}u_{n}D_{j}\varphi\exp(-Hu_{n})\ud x\\
&\indent+\int_{\mathbb{R}^N}\sum\limits_{i,j=1}^{N}\left(-Ha_{ij}(u_{n})+\frac{1}{2}D_{s}a_{ij}(u_{n})\right)D_{i}u_{n}D_{j}u_{n}\varphi\exp(-Hu_{n})\ud x\\
&\indent+\int_{\mathbb{R}^N}V(x)u_n\varphi\exp(-Hu_{n}) \ud x-\int_{\mathbb{R}^N}f(u_n)\varphi\exp(-Hu_{n}) \ud x\\
&\indent+\lambda_n\left(\int_{\mathbb{R}^N}u_{n}^{2}\ud x\right)^{\alpha}\int_{\mathbb{R}^N}u_{n}\varphi\exp(-Hu_{n})\ud x
-\lambda_n\int_{\mathbb{R}^N}|u_{n}|^{r-2}u_{n}\varphi\exp(-Hu_{n})\ud x
\end{aligned}
\end{equation*}
So
\begin{equation*}
\begin{aligned}
&0\leq\int_{\mathbb{R}^N}\sum\limits_{i,j=1}^{N}a_{ij}(u_{0})D_{i}u_{0}D_{j}\varphi\exp(-Hu_{0})\ud x\\
&\indent+\int_{\mathbb{R}^N}\sum\limits_{i,j=1}^{N}\left(-Ha_{ij}(u_{0})+\frac{1}{2}D_{s}a_{ij}(u_{0})\right)D_{i}u_{0}D_{j}u_{0}\varphi\exp(-Hu_{0})\ud x\\
&\indent+\int_{\mathbb{R}^N}V(x)u_0\varphi\exp(-Hu_{0})\ud x-\int_{\mathbb{R}^N}f(u_0)\varphi\exp(-Hu_{0})\ud x+o_{n}(1)\\
&=\int_{\mathbb{R}^N}\sum\limits_{i,j=1}^{N}a_{ij}(u_{0})D_{i}u_{0}D_{j}(\varphi\exp(-Hu_0)) \ud x\\
&\indent+\frac{1}{2}\int_{\mathbb{R}^N}\sum\limits_{i,j=1}^{N}D_{s}a_{ij}(u_{0})D_{i}u_{0}D_{j}u_{0}\varphi\exp(-Hu_0)\ud x\\
&\indent+\int_{\mathbb{R}^N}V(x)u_0\varphi\exp(-Hu_{0})\ud x-\int_{\mathbb{R}^N}f(u_0)\varphi\exp(-Hu_{0})\ud x+o_{n}(1),
\end{aligned}
\end{equation*}
}%
where we used the Fatou's Lemma and Lemma \ref{lem2.3}. Indeed, we also used
\begin{equation*}
\begin{aligned}
&\indent\lambda_{n}\left|\int_{\mathbb{R}^N}|D u_n|^{2}Du_nD\varphi \exp(-Hu_n)\ud x\right|\\
&\leq\lambda_{n}\left(\int_{\mathbb{R}^N}|Du_n|^{4}\ud x\right)^{\frac{3}{4}}\left(\int_{\mathbb{R}^N}|D\varphi|^{4}\exp(-Hu_n)^{4}\ud x\right)^{\frac{1}{4}}\to0,\ \mbox{as}\ n\to+\infty;
\end{aligned}
\end{equation*}
\begin{equation*}
\begin{aligned}
&\indent\lambda_{n}\left|\int_{\mathbb{R}^N}u_n^{3}\varphi\exp(-Hu_n)\ud x\right|\\
&\leq\lambda_{n}\left(\int_{\mathbb{R}^N}|u_n|^{4}\ud x\right)^{\frac{3}{4}}\left(\int_{\mathbb{R}^N}|\varphi|^{4}\exp(-Hu_n)^{4}\ud x\right)^{\frac{1}{4}}\to0,\ \mbox{as}\ n\to+\infty;
\end{aligned}
\end{equation*}
$$-\lambda_nH\int_{\mathbb{R}^N}|Du_n|^{4}\varphi\exp(-Hu_{n})\ud x\leq0;$$
$$\int_{\mathbb{R}^N}V(x)u_n\varphi\exp(-Hu_{n}) \ud x\to\int_{\mathbb{R}^N}V(x)u_0\varphi\exp(-Hu_{0})\ud x,\ \mbox{as}\ n\to+\infty;$$
$$\int_{\mathbb{R}^N}f(u_n)\varphi\exp(-Hu_{n}) \ud x\to\int_{\mathbb{R}^N}f(u_0)\varphi\exp(-Hu_{0})\ud x,\ \mbox{as}\ n\to+\infty;$$
$$\lambda_n\left(\int_{\mathbb{R}^N}u_{n}^{2}\ud x\right)^{\alpha}\int_{\mathbb{R}^N}u_{n}\varphi\exp(-Hu_{n})\ud x\to0,\ \mbox{as}\ n\to+\infty;$$
$$\lambda_n\int_{\mathbb{R}^N}|u_{n}|^{r-2}u_{n}\varphi\exp(-Hu_{n})\ud x\to 0,\ \mbox{as}\ n\to+\infty.$$
Thus for all $\varphi\in C^{\infty}_{0}(\mathbb{R}^N)$ and $\varphi\geq0$, we have
\begin{equation}\label{eq2.10}
\begin{aligned}
0&\leq\int_{\mathbb{R}^N}\sum\limits_{i,j=1}^{N}a_{ij}(u_{0})D_{i}u_{0}D_{j}(\varphi\exp(-Hu_0)) \ud x\\
&\indent+\frac{1}{2}\int_{\mathbb{R}^N}\sum\limits_{i,j=1}^{N}D_{s}a_{ij}(u_{0})D_{i}u_{0}D_{j}u_{0}\varphi\exp(-Hu_0)\ud x\\
&\indent+\int_{\mathbb{R}^N}V(x)u_0\varphi\exp(-Hu_{0})\ud x-\int_{\mathbb{R}^N}f(u_0)\varphi\exp(-Hu_{0})\ud x.
\end{aligned}
\end{equation}
For all $0\leq\xi\in C^{\infty}_{0}(\mathbb{R}^N)$, $u_{0}\in L^{\infty}(\mathbb{R}^N)$ implies that $\xi\exp(Hu_0)\in H^{1}(\mathbb{R}^N)$. We take a sequence $0\leq\varphi_n\in C^{\infty}_{0}(\mathbb{R}^N)$ such that
\begin{equation*}
\begin{aligned}
\varphi_n&\to\xi\exp(Hu_0)\ \mbox{in}\ H^{1}(\mathbb{R}^N)\, \mbox{and a.e.}\ x\in\mathbb{R}^N.
\end{aligned}
\end{equation*}
We can choose $\varphi=\varphi_n$ in (\ref{eq2.10}). As $n\to\infty$, we have
\begin{equation*}
\begin{aligned}
0&\leq\int_{\mathbb{R}^N}\sum\limits_{i,j=1}^{N}a_{ij}(u_{0})D_{i}u_{0}D_{j}\xi \ud x+\frac{1}{2}\int_{\mathbb{R}^N}\sum\limits_{i,j=1}^{N}D_{s}a_{ij}(u_{0})D_{i}u_{0}D_{j}u_{0}\xi \ud x\\
&\indent+\int_{\mathbb{R}^N}V(x)u_0\xi \ud x-\int_{\mathbb{R}^N}f(u_0)\xi \ud x,\ \mbox{for all}\ \xi\in C^{\infty}_{0}(\mathbb{R}^N)\ \mbox{and}\ \xi\geq0.
\end{aligned}
\end{equation*}
Similarly, by choosing $\psi=\varphi\exp(Hu_{n})$, we can get an opposite inequality. Hence, for all $\xi\in C^{\infty}_{0}(\mathbb{R}^N)$, we have
\begin{equation}\label{eq2.11}
\begin{aligned}
&\int_{\mathbb{R}^N}\sum\limits_{i,j=1}^{N}a_{ij}(u_{0})D_{i}u_{0}D_{j}\xi \ud x+\frac{1}{2}\int_{\mathbb{R}^N}\sum\limits_{i,j=1}^{N}D_{s}a_{ij}(u_{0})D_{i}u_{0}D_{j}u_{0}\xi \ud x\\
&\indent+\int_{\mathbb{R}^N}V(x)u_0\xi \ud x-\int_{\mathbb{R}^N}f(u_0)\xi \ud x=0.
\end{aligned}
\end{equation}
{\bf Step 3.} Convergence results.\\
Due to approximations, we can take $\xi=u_0$ in (\ref{eq2.11}) as follows.
\begin{equation}\label{eq2.12}
\begin{aligned}
&\int_{\mathbb{R}^N}\sum\limits_{i,j=1}^{N}a_{ij}(u_{0})D_{i}u_{0}D_{j}u_0 \ud x+\frac{1}{2}\int_{\mathbb{R}^N}\sum\limits_{i,j=1}^{N}D_{s}a_{ij}(u_{0})u_0 D_{i}u_{0}D_{j}u_{0} \ud x\\
&\indent+\int_{\mathbb{R}^N}V(x)u_0^{2} \ud x-\int_{\mathbb{R}^N}f(u_0)u_0\ud x=0.
\end{aligned}
\end{equation}
Recall that $u_n$ is a critical point of $I_{\lambda_n}$, we obtain
\begin{equation}\label{eq2.13}
\begin{aligned}
0&=\lambda_{n}\int_{\mathbb{R}^N}\left(|Du_{n}|^{4}+u_{n}^{4}\right)\ud x+\int_{\mathbb{R}^N}\sum\limits_{i,j=1}^{N}a_{ij}(u_{n})D_{i}u_{n}D_{j}u_{n} \ud x\\
&\indent+\frac{1}{2}\int_{\mathbb{R}^N}\sum\limits_{i,j=1}^{N}D_{s}a_{ij}(u_{n})u_{n} D_{i}u_{n}D_{j}u_{n} \ud x+\int_{\mathbb{R}^N}V(x)u_{n}^{2} \ud x\\
&\indent-\int_{\mathbb{R}^N}f(u_n)u_n\ud x+\lambda_{n}\left(\int_{\mathbb{R}^N}u_{n}^{2}\ud x\right)^{1+\alpha} -\lambda_{n}\int_{\mathbb{R}^N}|u_{n}|^{r}\ud x.
\end{aligned}
\end{equation}
From $u_n\to u_{0}$ in $L^{p}(\mathbb{R}^N)$ with $p\in(2,\frac{4N}{N-2})$, $\int_{\mathbb{R}^N}V(x)u_n^{2}\ud x\leq C$ and $\int_{\mathbb{R}^N}(1+u_{n}^{2})|Du_{n}|^{2}\ud x\leq C$, we have
$$
\int_{\mathbb{R}^N}f(u_n)u_n \ud x\to\int_{\mathbb{R}^N}f(u_0)u_0\ud x,\,\,
\lambda_n\left(\int_{\mathbb{R}^N}u_{n}^{2}\ud x\right)^{1+\alpha}\to0,\,\,
\lambda_n\int_{\mathbb{R}^N}|u_{n}|^{r}\ud x\to0.
$$
By all the above estimates and Lemma \ref{lem2.3}, we have
$$\lambda_{n}\int_{\mathbb{R}^N}\left(|Du_{n}|^{4}+u_{n}^{4}\right)\ud x\to0,\ \mbox{as}\ n\to\infty$$
and
\begin{equation*}
\begin{aligned}
&\int_{\mathbb{R}^N}\sum\limits_{i,j=1}^{N}\left(a_{ij}(u_{n})+\frac{1}{2}D_{s}a_{ij}(u_{n})u_{n}\right)D_{i}u_{n}D_{j}u_{n}\ud x+\int_{\mathbb{R}^N}V(x)u_{n}^{2} \ud x\\
&\indent\to\int_{\mathbb{R}^N}\sum\limits_{i,j=1}^{N}\left(a_{ij}(u_{0})+\frac{1}{2}D_{s}a_{ij}(u_{0})u_0\right)D_{i}u_{0}D_{j}u_{0}\ud x+\int_{\mathbb{R}^N}V(x)u_{0}^{2}\ud x,\ \mbox{as}\ n\to\infty.
\end{aligned}
\end{equation*}
Thus $I_{\lambda_{n}}(u_{n})\to I(u_{0})$, as $n\to\infty$.
\end{proof}

\section{The Mountain Pass Type Critical Point for Perturbed Functionals} \setcounter{equation}{0}
\setcounter{Assumption}{0}
\setcounter{Theorem}{0}
\setcounter{Proposition}{0}
\setcounter{Corollary}{0}
\setcounter{Lemma}{0}
\indent\indent We prove a compactness condition for $I_{\lambda}$ which will be used later.
\begin{Lemma}\label{lem3.1}
For any $\lambda\in(0,1]$ fixed, $I_{\lambda}$ satisfies the Palais-Smale condition.
\end{Lemma}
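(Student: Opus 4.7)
The plan is to verify the Palais-Smale condition for $I_{\lambda}$ in three standard steps: boundedness of an arbitrary PS sequence in $E$, extraction of a weakly convergent subsequence with auxiliary compactness, and upgrading to strong convergence in $E$.

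For boundedness, let $\{u_n\}\subset E$ satisfy $I_\lambda(u_n)\to c$ and $\|I_\lambda'(u_n)\|_{E^*}\to 0$, and compute $I_\lambda(u_n)-\tfrac{1}{\beta}\langle I_\lambda'(u_n),u_n\rangle$, where $\beta\in(\max\{4,q\},r)$ is the exponent supplied by $(a_3)$. Each of the following coefficients is strictly positive: $\lambda(\tfrac14-\tfrac1\beta)$ in front of $\int(|Du_n|^4+u_n^4)\,\ud x$ (since $\beta>4$); $(\tfrac12-\tfrac{C_3}{\beta})$ in front of $\int\sum_{i,j=1}^{N} a_{ij}(u_n)D_iu_nD_ju_n\,\ud x$ (since $C_3<\beta/2$ by $(a_3)$); $(\tfrac12-\tfrac1\beta)$ in front of $\int V(x)u_n^2\,\ud x$; $\lambda(\tfrac{1}{2(1+\alpha)}-\tfrac1\beta)$ in front of $(\int u_n^2\,\ud x)^{1+\alpha}$ (since $2(1+\alpha)<4<\beta$); and $\lambda(\tfrac1\beta-\tfrac1r)$ in front of $\int|u_n|^r\,\ud x$ (since $\beta<r$). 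The only negative contribution is $-(1-\mu/\beta)\int F(u_n)\,\ud x$, and $(f_1),(f_2)$ give $|F(u)|\leq\delta|u|^r+C_\delta u^2$ for any $\delta>0$; these two pieces are absorbed into $\lambda\int|u_n|^r\,\ud x$ and, via the elementary bound $t^{1+\alpha}\geq A_1t-A_2$, into $\lambda(\int u_n^2\,\ud x)^{1+\alpha}$. This yields $\|u_n\|_E\leq C$.

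For the weak convergence, reflexivity of $E$ together with Strauss' lemma for radial functions produces $u_n\rightharpoonup u_0$ in $E$, $u_n\to u_0$ in $L^p(\mathbb{R}^N)$ for $p\in(2,4N/(N-2))$, and pointwise a.e.; a Moser iteration as in Step 1 of Proposition 2.1 delivers a uniform $L^{\infty}$ bound on $\{u_n\}$. For strong convergence, I test $\langle I_\lambda'(u_n)-I_\lambda'(u_0),u_n-u_0\rangle\to 0$. The classical monotonicity $(|\xi|^2\xi-|\eta|^2\eta)\cdot(\xi-\eta)\geq c|\xi-\eta|^4$ of the $4$-Laplacian yields strong $L^4$ convergence $Du_n\to Du_0$, which together with strong $L^4$ convergence of $u_n$ (by $L^\infty$-interpolation with $L^2$) gives strong $W^{1,4}$ convergence. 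For the $H^1$ part, the ellipticity in $(a_2)$ produces
\begin{equation*}
\int|D(u_n-u_0)|^2\,\ud x\leq C\int\sum_{i,j=1}^{N}a_{ij}(u_n)D_i(u_n-u_0)D_j(u_n-u_0)\,\ud x,
\end{equation*}
and the right-hand side tends to $0$ by combining Lemma 2.3, the $L^{\infty}$ bound on $u_n$, and dominated convergence applied to the remaining terms $f(u_n)$, $V(x)u_n$, $|u_n|^{r-2}u_n$, $(\int u_n^2)^{\alpha}u_n$ and the lower-order quasilinear term $\sum D_sa_{ij}(u_n)D_iu_nD_ju_n$ appearing in $\langle I_\lambda'(u_n)-I_\lambda'(u_0),u_n-u_0\rangle$.

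The main obstacle is the strong-convergence step: the coefficients $a_{ij}(u_n)$ and $D_sa_{ij}(u_n)$ couple the unknown nonlinearly with its gradient, so neither monotonicity of a single linear operator nor the Ambrosetti-Rabinowitz condition directly forces strong $H^{1}$-convergence. The uniform $L^{\infty}$ bound furnished by Moser iteration (itself enabled by the $4$-Laplacian perturbation) together with the Lipschitz property $(a_1)$ ensures that $a_{ij}(u_n)\to a_{ij}(u_0)$ and $D_sa_{ij}(u_n)\to D_sa_{ij}(u_0)$ uniformly on compacta; Lemma 2.3 then dispatches the quasilinear cross terms, after which the ellipticity estimate closes into the desired strong convergence in $H^{1}$.
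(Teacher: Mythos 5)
Your boundedness step is essentially the paper's: you form $I_\lambda(u_n)-\tfrac{1}{\beta}\langle I_\lambda'(u_n),u_n\rangle$ with the same $\beta\in(\max\{4,q\},r)$, use $(a_3)$ for the quasilinear block, and absorb the $F$-term into the $\lambda|u_n|_r^r$ and $\lambda\bigl(\int u_n^2\bigr)^{1+\alpha}$ perturbations via $t^{1+\alpha}\geq A_1t-A_2$. That part is correct and matches the paper. The overall shape of your compactness step (test with $u_n-u_0$, use the $4$-Laplacian monotonicity and the ellipticity of $a_{ij}$) is also the paper's.

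The genuine gap is in how you justify passing to the limit in the quasilinear terms. You rest the whole strong-convergence step on a \emph{uniform $L^\infty$ bound for the PS sequence}, claimed ``by a Moser iteration as in Step 1 of Proposition 2.1.'' But that iteration is carried out in the paper only for exact critical points, where $\langle I_{\lambda_n}'(u_n),\varphi\rangle=0$ for every admissible $\varphi$. For a PS sequence you only have $|\langle I_\lambda'(u_n),\varphi\rangle|\leq \varepsilon_n\|\varphi\|_E$ with $\varepsilon_n\to0$, and the test function $\varphi=|u_n^T|^{2(\eta-1)}u_n$ has $\|\varphi\|_E\lesssim \eta\, T^{2(\eta-1)}\|u_n\|_E$; since each iteration step requires letting $T\to\infty$ at fixed $n$, the error term blows up and the iteration does not close as stated. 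This gap propagates: Lemma 2.3 explicitly hypothesizes $u_n(x)\in[-C,C]$, so your appeal to it inherits the same unproved assumption, as does the claimed uniform convergence of $a_{ij}(u_n)$ and $D_sa_{ij}(u_n)$. The fix — and what the paper actually does — is to avoid $L^\infty$ entirely: condition $(a_1)$ gives $|a_{ij}(u_n)-a_{ij}(u_0)|\leq C(1+|u_n|+|u_0|)|u_n-u_0|$, which together with the compact embedding $E\hookrightarrow L^p(\mathbb{R}^N)$ for $2<p<\tfrac{4N}{N-2}$ and the uniform bound on $\int(1+u_n^2)|Du_n|^2\,\ud x$ shows the coefficient-difference terms are $o_n(1)$, after which all remaining terms in $\langle I_\lambda'(u_n)-I_\lambda'(u_0),u_n-u_0\rangle$ are nonnegative (up to $o_n(1)$) and must individually vanish. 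A secondary imprecision: you cannot dispatch $\int V(x)(u_n-u_0)^2\,\ud x$ by ``dominated convergence,'' since $E$ does not embed compactly into $L^2(\mathbb{R}^N)$; this term (or the $\lambda|u_n|_2^{2\alpha}\int(u_n-u_0)^2\,\ud x$ term) is exactly what delivers the $L^2$-convergence needed for $\|u_n-u_0\|\to0$, and it must be kept on the ``good'' side of the inequality as a nonnegative quantity forced to zero, not discarded as a remainder.
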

\begin{proof}
Assume that $\{u_n\}\subset E$ is a (PS) sequence of $I_{\lambda}$, that is, $|I_{\lambda}(u_n)|\leq C$, $I_{\lambda}'(u_n)\to0$ as $n\to\infty$. Recalling that $\beta\in(\max\{4,q\},r)$, we have
\begin{equation}\label{eq3.1}
\begin{aligned}
C+o_n(1)\|u_n\|&\geq I_{\lambda}(u_{n})-\frac{1}{\beta}\langle I'_{\lambda}(u_{n}),u_{n}\rangle\\
&=\left(\frac{1}{4}-\frac{1}{\beta}\right)\lambda\int_{\mathbb{R}^N}(|Du_{n}|^{4}+u_{n}^{4})\ud x\\
&\indent+\int_{\mathbb{R}^N}\sum\limits_{i,j=1}^{N}\left(\left(\frac{1}{2}-\frac{1}{\beta}\right)a_{ij}(u_{n})-\frac{1}{2\beta}D_{s}a_{ij}(u_{n})u_{n}\right)D_{i}u_{n}D_{j}u_{n}\ud x\\
&\indent+\left(\frac{1}{2}-\frac{1}{\beta}\right)\int_{\mathbb{R}^N}V(x)u_{n}^{2}\ud x+\int_{\mathbb{R}^N}\left(\frac{1}{\beta}f(u_n)u_n-F(u_n)\right)\ud x\\
&\indent+\left(\frac{1}{2(1+\alpha)}-\frac{1}{\beta}\right)\lambda\left(\int_{\mathbb{R}^N}u_{n}^{2}\ud x\right)^{1+\alpha}+\left(\frac{1}{\beta}-\frac{1}{r}\right)\lambda\int_{\mathbb{R}^N}|u_{n}|^{r}\ud x\\
&\geq \left(\frac{1}{4}-\frac{1}{\beta}\right)\lambda\int_{\mathbb{R}^N}(|Du_{n}|^{4}+u_{n}^{4})\ud x\\
&\indent+C\int_{\mathbb{R}^N}(1+u_{n}^{2})|Du_{n}|^{2}\ud x+\left(\frac{1}{2}-\frac{1}{\beta}-\delta\right)\int_{\mathbb{R}^N}V(x)u_{n}^{2}\ud x\\ &\indent-C_{\delta}\int_{\mathbb{R}^N}|u_{n}|^{q}\ud x+\left(\frac{1}{2(1+\alpha)}-\frac{1}{\beta}\right)\lambda\left(\int_{\mathbb{R}^N}u_{n}^{2}\ud x\right)^{1+\alpha}\\
&\indent+\left(\frac{1}{\beta}-\frac{1}{r}\right)\lambda\int_{\mathbb{R}^N}|u_{n}|^{r}\ud x,
\end{aligned}
\end{equation}
where $\delta>0$ is small enough.

Now we claim that $\{u_n\}$ is bounded in $E$. In fact, for any $A_1>0$, we can choose $A_2>0$ such that
$$t^{1+\alpha}>A_{1}t-A_2,\ \mbox{for}\ t>0.$$
Apply this with $t=|u_n|_{2}^{2}$ and $\delta=\frac{1}{4}-\frac{1}{2\beta}$, then by (\ref{eq3.1}) we have
\begin{equation}\label{eq3.2}
\begin{aligned}
C+o_n(1)\|u_n\|&\geq \left(\frac{1}{4}-\frac{1}{\beta}\right)\lambda\int_{\mathbb{R}^N}(|Du_{n}|^{4}+u_{n}^{4})\ud x\\
&\indent+C\int_{\mathbb{R}^N}(1+u_{n}^{2})|Du_{n}|^{2}\ud x+\left(\frac{1}{4}-\frac{1}{2\beta}\right)\int_{\mathbb{R}^N}V(x)u_{n}^{2}\ud x\\ &\indent+\int_{\mathbb{R}^N}\left[\left(\frac{1}{\beta}-\frac{1}{r}\right)\lambda|u_{n}|^{r}+\left(\frac{1}{2(1+\alpha)}-\frac{1}{\beta}\right)\lambda A_{1}u^{2}_{n}-C|u_{n}|^{q}\right]\ud x\\
&\indent-\left(\frac{1}{2(1+\alpha)}-\frac{1}{\beta}\right)\lambda A_2.
\end{aligned}
\end{equation}
Since $2<\mu<4\leq\max\{4,q\}<\beta<r$ and $0<\alpha<\frac{\mu-2}{\mu N+2}$, then $\frac{1}{\beta}-\frac{1}{r}>0$, $\frac{1}{2(1+\alpha)}-\frac{1}{\beta}>0$ and $\frac{1}{q}-\frac{1}{\beta}>0$. We take $A_1$ large enough such that the function $$\left(\frac{1}{\beta}-\frac{1}{r}\right)\lambda t^{r}+\left(\frac{1}{2(1+\alpha)}-\frac{1}{\beta}\right)\lambda A_{1}t^{2}-Ct^{q}>0,\ \mbox{for any}\ t>0.$$
Hence $\{u_n\}$ is bounded in $E$. Up to a subsequence, we may assume that $u_n\rightharpoonup u$ in $E$. Since $E$ is radical, the embedding of $E\hookrightarrow L^{p}(\mathbb{R}^N)$ is compact ($2< p<\frac{4N}{N-2}$). Then
{\allowdisplaybreaks
\begin{equation}\label{eq3.3}
\begin{aligned}
&\indent\langle I'_{\lambda}(u_{n})-I'_{\lambda}(u),u_{n}-u\rangle\\
&=\lambda\int_{\mathbb{R}^N}\left((|Du_{n}|^{2}Du_n-|Du|^{2}Du)\cdot D(u_n-u)+(u_{n}^{3}-u^{3})(u_n-u)\right)\ud x\\
&\indent+\int_{\mathbb{R}^N}\sum\limits_{i,j=1}^{N}\left(a_{ij}(u_{n})D_{i}u_n-a_{ij}(u)D_{i}u\right)(D_{j}u_{n}-D_{j}u)\ud x\\
&\indent+\frac{1}{2}\int_{\mathbb{R}^N}\sum\limits_{i,j=1}^{N}\left(D_{s}a_{ij}(u_{n})D_{i}u_nD_{j}u_{n}-D_{s}a_{ij}(u)D_{i}uD_{j}u\right)(u_n-u)\ud x\\
&\indent+\int_{\mathbb{R}^N}V(x)(u_{n}-u)^{2}\ud x-\int_{\mathbb{R}^N}(f(u_n)-f(u))(u_n-u)\ud x\\
&\indent+\lambda|u_n|_{2}^{2\alpha}\int_{\mathbb{R}^N}(u_n-u)^{2}\ud x+\lambda(|u_n|_{2}^{2\alpha}-|u|_{2}^{2\alpha})\int_{\mathbb{R}^N}u(u_n-u)\ud x\\
&\indent-\lambda\int_{\mathbb{R}^N}(|u_{n}|^{r-2}u_n-|u|^{r-2}u)(u_n-u)\ud x.
\end{aligned}
\end{equation}
}%
Now we estimate every term in (\ref{eq3.3}). By $(a_1)$ and $(a_3)$, we have
$$
\liminf_{n\rightarrow\infty}\int_{\mathbb{R}^N}\sum\limits_{i,j=1}^{N}\left(D_{s}a_{ij}(u_{n})D_{i}u_nD_{j}u_{n}-D_{s}a_{ij}(u)D_{i}uD_{j}u\right)(u_n-u)\ud x\geq 0.
$$
By the condition $(a_1)$ we obtain for some $\theta\in(0,1)$,
{\allowdisplaybreaks
$$
\aligned
&\indent\left|a_{ij}(u_n)-a_{ij}(u)\right|\\
&=|D_{s}a_{ij}(\theta u_n+(1-\theta)u)||u_n-u|\\
&\leq(|D_{s}a_{ij}(0)|+|D_{s}a_{ij}(\theta u_n+(1-\theta)u)-D_{s}a_{ij}(0)|)|u_n-u|\\
&\leq C(1+|u_n|+|u|)|u_n-u|,
\endaligned
$$
}%
and so
\begin{equation*}
\begin{aligned}
&\indent\int_{\mathbb{R}^N}\sum\limits_{i,j=1}^{N}\left(a_{ij}(u_{n})D_{i}u_n-a_{ij}(u)D_{i}u\right)(D_{j}u_{n}-D_{j}u)\ud x\\
&=\int_{\mathbb{R}^N}\sum\limits_{i,j=1}^{N}a_{ij}(u_n)(D_{i}u_n-D_{i}u)(D_{j}u_n-D_{j}u)\ud x\\
&\indent+\int_{\mathbb{R}^N}\sum\limits_{i,j=1}^{N}(a_{ij}(u_n)-a_{ij}(u))D_{i}u(D_{j}u_n-D_{j}u)\ud x\\
&\geq\int_{\mathbb{R}^N}\sum\limits_{i,j=1}^{N}a_{ij}(u_n)(D_{i}u_n-D_{i}u)(D_{j}u_n-D_{j}u)\ud x\\
&\,\,\,\,\,\,\,\,\,\,\,\,\,\,\,\,\,\,-C\int_{\mathbb{R}^N}(1+|u_n|+|u|)|u_n-u||Du||Du_n-Du|\ud x.
\end{aligned}
\end{equation*}
Then
\begin{equation*}
\begin{aligned}
&\indent\int_{\mathbb{R}^N}\sum\limits_{i,j=1}^{N}\left(a_{ij}(u_{n})D_{i}u_n-a_{ij}(u)D_{i}u\right)(D_{j}u_{n}-D_{j}u)\ud x\\
&\geq C\int_{\mathbb{R}^N}(1+u_{n}^{2})|D(u_n-u)|^{2}\ud x+o_{n}(1).
\end{aligned}
\end{equation*}
Since $u_n\rightharpoonup u$ in $L^{2}(\mathbb{R}^N)$, $|u_n|_{2}$ is bounded, one easily has
$$(|u_n|_{2}^{2\alpha}-|u|_{2}^{2\alpha})\int_{\mathbb{R}^N}u(u_n-u)\ud x=o_{n}(1).$$
Returning to (\ref{eq3.3}) we have
\begin{equation*}
\begin{aligned}
o_{n}(1)&=\lambda\int_{\mathbb{R}^N}\left((|Du_{n}|^{2}Du_n-|Du|^{2}Du)\cdot D(u_n-u)+(u_{n}^{3}-u^{3})(u_n-u)\right)\ud x\\
&\indent+\int_{\mathbb{R}^N}\sum\limits_{i,j=1}^{N}\left(a_{ij}(u_{n})D_{i}u_n-a_{ij}(u)D_{i}u\right)(D_{j}u_{n}-D_{j}u)\ud x\\
&\indent+\frac{1}{2}\int_{\mathbb{R}^N}\sum\limits_{i,j=1}^{N}\left(D_{s}a_{ij}(u_{n})D_{i}u_nD_{j}u_{n}-D_{s}a_{ij}(u)D_{i}uD_{j}u\right)(u_n-u)\ud x\\
&\indent+\int_{\mathbb{R}^N}V(x)(u_{n}-u)^{2}\ud x-\int_{\mathbb{R}^N}(f(u_n)-f(u))(u_n-u)\ud x\\
&\indent+\lambda|u_n|_{2}^{2\alpha}\int_{\mathbb{R}^N}(u_n-u)^{2}\ud x+\lambda(|u_n|_{2}^{2\alpha}-|u|_{2}^{2\alpha})\int_{\mathbb{R}^N}u(u_n-u)\ud x\\
&\indent-\lambda\int_{\mathbb{R}^N}(|u_{n}|^{r-2}u_n-|u|^{r-2}u)(u_n-u)\ud x,
\end{aligned}
\end{equation*}
which yields that
\begin{equation*}
\begin{aligned}
o_{n}(1)&\geq\lambda\int_{\mathbb{R}^N}\left((|Du_{n}|^{2}Du_n-|Du|^{2}Du)D(u_n-u)+(u_{n}^{3}-u^{3})(u_n-u)\right)\ud x\\
&\indent+C\int_{\mathbb{R}^N}(1+u_{n}^{2})|D(u_n-u)|^{2}\ud x+\lambda|u_n|_{2}^{2\alpha}\int_{\mathbb{R}^N}(u_n-u)^{2}\ud x\\
&\indent+\int_{\mathbb{R}^N}V(x)(u_{n}-u)^{2}\ud x+o_{n}(1).
\end{aligned}
\end{equation*}
Thus
$$C\int_{\mathbb{R}^N}\left(|D(u_n-u)|^{4}+(u_n-u)^{4}+|D(u_n-u)|^{2}+(u_n-u)^{2}\right)\ud x\leq o_{n}(1),$$
which implies that $\|u_n-u\|\to0$, that is, $u_n\to u$ in $E$.
\end{proof}
\begin{Lemma}\label{lem3.2}
For $\lambda\in(0,1]$, $I_{\lambda}$ has a mountain pass geometry, that is, there exist constants $R,\rho>0$(independent of $\lambda$) and $\phi\in E\setminus S_{R}$ such that $I_{\lambda}(\phi)<0$ and
$$I_{\lambda}(u)\geq\rho,\ \mbox{when}\ u\in\partial S_{R},$$
where
$$S_{R}=\left\{u\in E:\ \int_{\mathbb{R}^N}(|Du|^{2}+u^{2})\ud x+2\int_{\mathbb{R}^N}u^{2}|Du|^{2}\ud x\leq R^{2}\right\}.$$
\end{Lemma}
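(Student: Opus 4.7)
The plan is to verify the two ingredients of the mountain pass geometry separately, with the key observation that by $(a_2)$ and $(V_1)$ the non-perturbation quadratic part of $I_\lambda$ controls from below the quantity $\mathcal N(u):=\int_{\mathbb{R}^N}(|Du|^2+u^2)\,\ud x+2\int_{\mathbb{R}^N}u^2|Du|^2\,\ud x$ that defines $S_R$, up to a universal constant.

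For the lower bound on $\partial S_R$: using $(f_1)$--$(f_2)$ to get $|F(t)|\leq\delta t^2+C_\delta|t|^q$, discarding the nonnegative $\lambda$-perturbations $\frac{\lambda}{4}\int(|Du|^4+u^4)\,\ud x$ and $\frac{\lambda}{2(1+\alpha)}(|u|_2^2)^{1+\alpha}$, and choosing $\delta$ so small that the $V$-term absorbs $\delta|u|_2^2$, I obtain, uniformly in $\lambda\in(0,1]$,
$$
I_\lambda(u)\ \geq\ c_0\,\mathcal N(u)\ -\ C|u|_q^q\ -\ \tfrac{1}{r}|u|_r^r.
$$
The crux is to dominate $|u|_q$ and $|u|_r$ by a super-quadratic power of $R$. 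Setting $v:=u^2$, the Sobolev inequality $|v|_{2N/(N-2)}\leq C|\nabla v|_2$ together with $|\nabla v|_2^2=4\int u^2|Du|^2\leq 2R^2$ on $\partial S_R$ gives $|u|_{4N/(N-2)}\leq CR^{1/2}$. Interpolating with $|u|_2\leq R$, a short computation yields $|u|_q^q\leq CR^{2(q+N)/(N+2)}$, and the exponent strictly exceeds $2$ precisely because $q>2$ (the same bound with $r$ in place of $q$ handles the $L^r$ term, using $\lambda\leq 1$). Consequently $I_\lambda(u)\geq c_0 R^2-CR^{\sigma}$ with $\sigma>2$; for $R$ small, $\rho:=c_0 R^2/2$ works independently of $\lambda$.

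For the second part, I would fix a test function $\psi\in C^\infty_0(\mathbb{R}^N)\setminus\{0\}$ and estimate along the ray $t\psi$. By $(f_3)$ we have $F\geq 0$, so $-\int F(t\psi)\,\ud x\leq 0$; by $(a_2)$ the quasi-linear energy is at most $C(t^2\int|D\psi|^2+t^4\int\psi^2|D\psi|^2)$. All positive contributions to $I_\lambda(t\psi)$ therefore grow no faster than $C(\lambda t^4+t^4+t^2+\lambda t^{2(1+\alpha)})$. Since $\alpha<(\mu-2)/(\mu N+2)$ with $\mu<4$ and $N\geq 3$ forces $\alpha<1$, and since $r>\beta>4$, the term $-\frac{\lambda t^r}{r}|\psi|_r^r$ dominates as $t\to\infty$ for each fixed $\lambda\in(0,1]$. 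Picking $t_\lambda$ large enough then yields $I_\lambda(t_\lambda\psi)<0$, while $\mathcal N(t_\lambda\psi)=t_\lambda^2\int(|D\psi|^2+\psi^2)\,\ud x+2t_\lambda^4\int\psi^2|D\psi|^2\,\ud x\to\infty$ places $\phi:=t_\lambda\psi$ outside $S_R$.

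The main subtlety is handling the full subcritical range $q\in(2,\tfrac{4N}{N-2})$ in the first step: a direct Sobolev embedding of $H^1$ only reaches $q\leq 2N/(N-2)$. The device of applying Sobolev to $v=u^2$, exploiting the quasi-linear quantity $\int u^2|Du|^2=\tfrac14|\nabla(u^2)|_2^2$, is precisely what the coefficient $2\int u^2|Du|^2$ built into the definition of $S_R$ is designed to enable, and it is the only non-routine point of the argument.
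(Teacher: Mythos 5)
Your first step (the lower bound $I_\lambda\geq\rho$ on $\partial S_R$) is correct and is essentially the paper's own argument: bound $-\int F$ by $\delta|u|_2^2+C_\delta|u|_q^q$, control the quadratic part from below by $c_0\bigl(\int(|Du|^2+u^2)+2\int u^2|Du|^2\bigr)$ via $(a_2)$ and $(V_1)$, get $|u|_{4N/(N-2)}\leq CR^{1/2}$ from the Sobolev inequality applied to $u^2$, and interpolate; your exponent $2(q+N)/(N+2)$ agrees with the paper's $r(1+\theta)/2$ after simplification, and both exceed $2$ exactly because $q,r>2$.

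The second step is where your argument genuinely diverges, and in a way that matters. You take the ray $t\psi$, discard the $F$-term using only $F\geq0$, and let the single negative term $-\tfrac{\lambda}{r}t^r|\psi|_r^r$ beat the $\lambda$-free $t^4$ contribution coming from $\int a_{ij}(t\psi)D_i(t\psi)D_j(t\psi)\sim t^4\int\psi^2|D\psi|^2$. This forces $t_\lambda\gtrsim\lambda^{-1/(r-4)}$, so your $\phi=t_\lambda\psi$ blows up as $\lambda\to0^+$ and, worse, $\max_{0\leq t\leq t_\lambda}I_\lambda(t\psi)\sim\lambda^{-4/(r-4)}\to\infty$. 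While this satisfies the literal statement (only $R,\rho$ are declared $\lambda$-independent), it destroys what the lemma is for: Lemma \ref{lem3.3} needs a single admissible path along which $\sup I_\lambda$ is bounded by a constant $m_2$ independent of $\lambda$, so that Proposition \ref{pro2.1} can be applied as $\lambda\to0^+$. The paper avoids this by the combined amplitude--dilation scaling $u_t(x)=t\phi(x/t)$: then $Du_t=D\phi(\cdot/t)$, the quasilinear term scales like $t^{N}+t^{N+2}$, the nonlocal term like $t^{(1+\alpha)(N+2)}$, and $(f_3)$ — used quantitatively, since it yields $F(t)\geq F(\pm1)|t|^\mu$ for $|t|\geq1$ — gives $\int F(u_t)\gtrsim t^{N+\mu}$ with $N+\mu>(1+\alpha)(N+2)$ precisely because $\alpha<\tfrac{\mu-2}{\mu N+2}<\tfrac{\mu-2}{N+2}$; the two $\lambda$-weighted terms combine into $\lambda\psi(t)\leq0$ for large $t$. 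This produces one $t_0$, hence one $\phi$, valid uniformly for all $\lambda\in(0,1]$. A pure amplitude scaling cannot be repaired by reinstating the $F$-term, since $\mu<4$ means $t^\mu$ never dominates the $t^4$ quasilinear growth; the dilation is essential.
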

\begin{proof}
When $u\in\partial S_{R}$, by applying interpolation inequality, we have
$$|u|_{r}^{r}\leq|u|_{2}^{r\theta}|u|_{\frac{4N}{N-2}}^{r(1-\theta)}\leq CR^{r\theta}R^{\frac{r(1-\theta)}{2}}=CR^{\frac{r(1+\theta)}{2}},$$
where $q\in\left(2,\frac{4N}{N-2}\right)$, $r\in\left(\max\{4,q\},\frac{4N}{N-2}\right)$ and
$$
\theta=\left(\frac1r-\frac{N-2}{4N}\right)\frac{4N}{N+2}\in(0,1),\,\,\frac{r(1+\theta)}{2}>2.
$$
Using the above estimates, we have
\begin{equation*}
\begin{aligned}
I_{\lambda}(u)&\geq\frac{1}{2}\int_{\mathbb{R}^N}\sum\limits_{i,j=1}^{N}a_{ij}(u)D_{i}uD_{j}u\ud x+\frac{1}{2}\int_{\mathbb{R}^N}V(x)u^{2}\ud x\\
&\indent-\int_{\mathbb{R}^N}F(u)\ud x-\frac{1}{r}\int_{\mathbb{R}^N}|u|^{r}\ud x\\
&\geq\frac{1}{2}\int_{\mathbb{R}^N}\sum\limits_{i,j=1}^{N}a_{ij}(u)D_{i}uD_{j}u\ud x+\frac{1}{2}\int_{\mathbb{R}^N}(V(x)-\delta)u^{2}\ud x-C_{\delta}\int_{\mathbb{R}^N}|u|^{r}\ud x\\
&\geq\frac{1}{2}\int_{\mathbb{R}^N}(1+2u^{2})|Du|^{2}\ud x+\frac{1}{4}\int_{\mathbb{R}^N}V_0u^{2}\ud x-C\int_{\mathbb{R}^N}|u|^{r}\ud x\\
&\geq CR^{2}-CR^{\frac{r(1+\theta)}{2}}.
\end{aligned}
\end{equation*}
Take $R$ small enough, then we get $I_{\lambda}(u)\geq CR^{2}:=\rho$, where $\rho$ is independent of $\lambda$.

On the other hand, taking $\phi\in C^{\infty}_{0}(\mathbb{R}^N)$ and for $t>0$, we consider
\begin{equation*}
\begin{aligned}
I_{\lambda}\left(t\phi(\frac{x}{t})\right)&=\frac{\lambda t^{N}}{4}\int_{\mathbb{R}^N}|D\phi|^{4}\ud x+\frac{\lambda t^{N+4}}{4}\int_{\mathbb{R}^N}\phi^{4}\ud x+\frac{t^N}{2}\int_{\mathbb{R}^N}\sum\limits_{i,j=1}^{N}a_{ij}(t\phi)D_{i}\phi D_{j}\phi \ud x\\
&\indent+\frac{t^{N+2}}{2}\int_{\mathbb{R}^N}V(tx)\phi^{2}\ud x-t^{N}\int_{\mathbb{R}^N}F(t\phi)\ud x+\frac{\lambda t^{(1+\alpha)(N+2)}}{2(1+\alpha)}\left(\int_{\mathbb{R}^N}\phi^{2}\ud x\right)^{1+\alpha}\\
&\indent-\frac{\lambda t^{N+r}}{r}\int_{\mathbb{R}^N}|\phi|^{r}\ud x\\
&\leq\frac{ t^{N}}{4}\int_{\mathbb{R}^N}|D\phi|^{4}\ud x+\frac{C_{2}t^N}{2}\int_{\mathbb{R}^N}(1+t^{2}\phi^{2})|D\phi|^{2} \ud x+\frac{t^{N+2}}{2}\int_{\mathbb{R}^N}V_{1}\phi^{2}\ud x\\
&\indent-Ct^{N+\mu}\int_{\mathbb{R}^N}|\phi|^{\mu}\ud x+\frac{ t^{(1+\alpha)(N+2)}}{2(1+\alpha)}\left(\int_{\mathbb{R}^N}\phi^{2}\ud x\right)^{1+\alpha}\\
&\indent+\frac{\lambda t^{N+4}}{4}\int_{\mathbb{R}^N}\phi^{4}\ud x-\frac{\lambda t^{N+r}}{r}\int_{\mathbb{R}^N}|\phi|^{r}\ud x+Ct^N\\
&\leq\frac{ t^{N}}{4}\int_{\mathbb{R}^N}\left(|D\phi|^{4}+2C_{2}|D\phi|^{2}\right)\ud x+\frac{t^{N+2}}{2}\int_{\mathbb{R}^N}\left(V_{1}\phi^{2}+C_{2}\phi^{2}|D\phi|^{2}\right)\ud x\\
&\indent-Ct^{N+\mu}\int_{\mathbb{R}^N}|\phi|^{\mu}\ud x+\frac{ t^{(1+\alpha)(N+2)}}{2(1+\alpha)}\left(\int_{\mathbb{R}^N}\phi^{2}\ud x\right)^{1+\alpha}+Ct^N+\lambda \psi(t)\\
&\to-\infty,\  \mbox{as}\ t\to\infty.
\end{aligned}
\end{equation*}
Here we used the fact that $\psi(t)<0$ for $t>0$ large enough, where
$$
\psi(t):=\frac{ t^{N+4}}{4}\int_{\mathbb{R}^N}\phi^{4}\ud x-\frac{ t^{N+r}}{r}\int_{\mathbb{R}^N}|\phi|^{r}\ud x.
$$
We take $t>0$ large enough such that $I_{\lambda}(t\phi(\frac{x}{t}))<0$ and $t\phi(\frac{x}{t})\in E\setminus S_R$.
\end{proof}
\begin{Lemma}\label{lem3.3}
For any $\lambda\in(0,1]$, there exists a positive critical point $u_{\lambda}$ of $I_{\lambda}$. Furthermore , there exist two positive constants $m_1$ and $m_2$ independent of $\lambda$, such that $m_1\leq I_{\lambda}(u_{\lambda})\leq m_2$.
\end{Lemma}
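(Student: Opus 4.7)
The plan is to apply the mountain pass theorem to a truncated version of $I_\lambda$ that forces a nonnegative critical point, then upgrade to strict positivity via the strong maximum principle. Define $\hat f(t)=f(t)$ for $t\geq 0$ and $\hat f(t)=0$ for $t<0$, set $\hat F(t)=\int_0^t\hat f(s)\,\ud s$, and let $\hat I_\lambda$ be obtained from (\ref{eq1.4}) by replacing $F$ with $\hat F$ and $|u|^r$ with $(u^+)^r$. The modified nonlinearity $\hat f$ is continuous (as $f(0)=0$), satisfies $\hat f(t)t\geq 0$ and $0\leq\hat F(t)\leq\delta t^2+C_\delta|t|^q$, so the arguments of Lemma \ref{lem3.1} and Lemma \ref{lem3.2} transfer verbatim: $\hat I_\lambda\in C^1(E,\mathbb{R})$ satisfies (PS), and choosing a nonnegative $\phi\in C_0^\infty(\mathbb{R}^N)\setminus\{0\}$ (so that $\hat I_\lambda(t\phi(x/t))=I_\lambda(t\phi(x/t))$) one recovers the mountain pass geometry with the same constants $R,\rho>0$ independent of $\lambda\in(0,1]$.

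The mountain pass theorem then produces $u_\lambda\in E$ with $\hat I_\lambda'(u_\lambda)=0$ and $\hat I_\lambda(u_\lambda)=c_\lambda\geq\rho$, where
\[
c_\lambda:=\inf_{\gamma\in\Gamma}\max_{s\in[0,1]}\hat I_\lambda(\gamma(s)),\qquad \Gamma=\{\gamma\in C([0,1],E):\gamma(0)=0,\ \hat I_\lambda(\gamma(1))<0\}.
\]
Testing $\langle \hat I_\lambda'(u_\lambda),-u_\lambda^-\rangle=0$ with $-u_\lambda^-\in E$ and noting that $\hat f(u_\lambda)$ and $(u_\lambda^+)^{r-1}$ vanish on $\{u_\lambda<0\}$, condition $(a_3)$ delivers
\[
\sum_{i,j=1}^N\Big(a_{ij}(u_\lambda)+\tfrac12 D_s a_{ij}(u_\lambda)u_\lambda\Big)D_iu_\lambda D_ju_\lambda\geq C_2^{-1}(1+u_\lambda^2)|Du_\lambda|^2\geq 0,
\]
so every surviving term is nonnegative; in particular $V_0\int_{\{u_\lambda<0\}}u_\lambda^2\,\ud x\leq 0$, forcing $u_\lambda\geq 0$ a.e. Then $u_\lambda$ is in fact a critical point of $I_\lambda$, and since $I_\lambda(u_\lambda)=c_\lambda\geq\rho>0$, it is nontrivial. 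A Moser iteration as in Step 1 of Proposition \ref{pro2.1} places $u_\lambda\in L^\infty(\mathbb{R}^N)$, and the Pucci--Serrin strong maximum principle applied to the resulting quasi-linear equation yields $u_\lambda>0$.

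It remains to derive the uniform bounds $m_1\leq I_\lambda(u_\lambda)\leq m_2$. The lower bound $m_1:=\rho$ is immediate from Lemma \ref{lem3.2}. For the upper bound, I reuse the family $t\mapsto t\phi(x/t)$: inspecting the expansion of $I_\lambda(t\phi(x/t))$ in the proof of Lemma \ref{lem3.2}, each $\lambda$-dependent summand has coefficient $\lambda\in(0,1]$, whereas the dominant blow-up term $-Ct^{N+\mu}\int|\phi|^\mu\,\ud x$ is $\lambda$-independent. Hence one may fix $T>0$ independent of $\lambda$ with $I_\lambda(T\phi(x/T))<0$ and $T\phi(x/T)\in E\setminus S_R$, producing the admissible reparametrised path $\gamma(s)=sT\phi(x/(sT))\in\Gamma$. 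Using $\lambda\leq 1$ to bound each term of $I_\lambda(t\phi(x/t))$ on $[0,T]$ gives
\[
c_\lambda\leq \max_{t\in[0,T]} I_\lambda(t\phi(x/t))=:m_2,
\]
with $m_2$ independent of $\lambda$. This $\lambda$-uniform control of the mountain-pass value---not merely the existence of a critical point for each fixed $\lambda$---is the main technical point, since it is exactly what permits the passage $\lambda\to 0^+$ via Proposition \ref{pro2.1} in the proof of Theorem \ref{thm1.1}.
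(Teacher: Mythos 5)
Your proof is correct and follows essentially the same route as the paper: truncate the nonlinearity to $F(u^{+})$ and $|u^{+}|^{r}$, carry Lemmas \ref{lem3.1}--\ref{lem3.2} over to the truncated functional, apply the mountain pass theorem, and take $m_1=\rho$ together with a $\lambda$-uniform upper bound along a fixed scaling path (the paper packages the $\lambda$-free majorant as $\max\{J_1,J_2\}$ while you use a single crude bound, and you supply the $-u_\lambda^{-}$ test and maximum-principle details that the paper only alludes to). One caution: your stated reason that $T$ can be fixed independently of $\lambda$ --- that the blow-up term $-Ct^{N+\mu}\int|\phi|^{\mu}\,\ud x$ is $\lambda$-independent and dominant --- is not by itself sufficient, since the positive term $\tfrac{\lambda t^{N+4}}{4}\int\phi^{4}\,\ud x$ grows faster than $t^{N+\mu}$ (because $\mu<4$); one must pair it with $-\tfrac{\lambda t^{N+r}}{r}\int|\phi|^{r}\,\ud x$, whose common factor $\lambda$ makes the pair $\lambda\psi(t)\leq 0$ for all large $t$ uniformly in $\lambda\in(0,1]$, exactly as in the computation of Lemma \ref{lem3.2} that you cite.
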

\begin{proof}
To find a positive critical point of $I_{\lambda}$, we only need modify $I_{\lambda}$ to $I_{\lambda}^{+}$, which is defined by
\begin{equation*}
\begin{aligned}
I_{\lambda}^{+}(u)&=\frac{\lambda}{4}\int_{\mathbb{R}^N}\left(|Du|^{4}+u^{4}\right)\ud x+\frac{1}{2}\int_{\mathbb{R}^N}\sum\limits_{i,j=1}^{N}a_{ij}(u)D_{i}uD_{j}u\ud x+\frac{1}{2}\int_{\mathbb{R}^N}V(x)u^{2}\ud x\\
&\indent-\int_{\mathbb{R}^N}F(u^{+})\ud x+\frac{\lambda}{2(1+\alpha)}\left(\int_{\mathbb{R}^N}u^{2}\ud x\right)^{1+\alpha}-\frac{\lambda}{r}\int_{\mathbb{R}^N}|u^{+}|^{r}\ud x.
\end{aligned}
\end{equation*}
In fact, Lemma \ref{lem3.1} and \ref{lem3.2} are still valid for $I_{\lambda}^{+}$. Then from the well-known mountain pass lemma and maximum principle, we have that there exists $0<u_{\lambda}\in E$ such that $I_{\lambda}(u_{\lambda})=c_{\lambda}$. By the proof of Lemma \ref{lem3.2}, we know that $\rho$ does not depend on $\lambda$. Thus we can choose $m_1:=\rho$.

Next we take $m_2:=\max\{\max\limits_{s\in[0,1]}J_{1}(\gamma_{0}(s)),\max\limits_{s\in[0,1]}J_{2}(\gamma_{0}(s))\}$, where
\begin{equation*}
\begin{aligned}
J_{1}(u)&=\frac{1}{4}\int_{\mathbb{R}^N}\left(|Du|^{4}+u^{4}\right)\ud x+\frac{1}{2}\int_{\mathbb{R}^N}\sum\limits_{i,j=1}^{N}a_{ij}(u)D_{i}uD_{j}u\ud x+\frac{1}{2}\int_{\mathbb{R}^N}V(x)u^{2}\ud x\\
&\indent-\int_{\mathbb{R}^N}F(u^{+})\ud x+\frac{1}{2(1+\alpha)}\left(\int_{\mathbb{R}^N}u^{2}\ud x\right)^{1+\alpha}-\frac{1}{r}\int_{\mathbb{R}^N}|u^{+}|^{r}\ud x,
\end{aligned}
\end{equation*}
\begin{equation*}
\begin{aligned}
J_{2}(u)&=\frac{1}{4}\int_{\mathbb{R}^N}|Du|^{4}\ud x+\frac{1}{2}\int_{\mathbb{R}^N}\sum\limits_{i,j=1}^{N}a_{ij}(u)D_{i}uD_{j}u\ud x+\frac{1}{2}\int_{\mathbb{R}^N}V(x)u^{2}\ud x\\
&\indent-\int_{\mathbb{R}^N}F(u^{+})\ud x+\frac{1}{2(1+\alpha)}\left(\int_{\mathbb{R}^N}u^{2}\ud x\right)^{1+\alpha}.
\end{aligned}
\end{equation*}
From the above definitions, we notice that
$$I_{\lambda}(u)\leq\max\{J_{1}(u),J_{2}(u)\},\  \mbox{for all}\ u\in E.$$
Using the similar arguments in Lemma \ref{lem3.2}, there exists a path $\gamma_{0}(s)=st_{0}\phi(\frac{x}{t_{0}})$ satisfying $J_{1}(t_{0}\phi(\frac{x}{t_{0}}))<0$ and $J_{2}(t_{0}\phi(\frac{x}{t_{0}}))<0$ with $t_{0}$ large enough. Hence, we have the following estimates
$$c_{\lambda}=\inf\limits_{\gamma\in\Gamma}\max\limits_{s\in[0,1]}I_{\lambda}(\gamma(s))\leq\max\limits_{s\in[0,1]}I_{\lambda}(\gamma_{0}(s))\leq\max\{\max\limits_{s\in[0,1]}J_{1}(\gamma_{0}(s)),\max\limits_{s\in[0,1]}J_{2}(\gamma_{0}(s))\}=m_2,$$
where
$$\Gamma:=\left\{\gamma\in C([0,1],E:\ \gamma(0)=0,\ \gamma(1)=t_0\phi\left(\frac{x}{t_0}\right)\right\}.$$
\end{proof}
\noindent{\bf Proof of Theorem \ref{thm1.1}}: By Lemma \ref{lem3.3}, for any $\lambda\in(0,1]$, we know that  there exists a positive critical point $u_{\lambda}$ of $I_{\lambda}$. Furthermore, there exist two positive constants $m_1$ and $m_2$ independent of $\lambda$, such that $m_1\leq I_{\lambda}(u_{\lambda})\leq m_2$.
Let $\lambda_n\to0^{+}$ ($n\to\infty$) and $u_n$ be a positive critical point of $I_{\lambda_n}$ with $I_{\lambda_n}(u_n)\leq m_2$, according to Proposition \ref{pro2.1}, there exists $u_{0}\in H_{r}^{1}(\mathbb{R}^N)\cap L^{\infty}(\mathbb{R}^N)$, such that $u_{n}\to u_0$ in $H^{1}_{r}(\mathbb{R}^N)$, $u_{n}Du_{n}\to u_0Du_0$ in $L^{2}(\mathbb{R}^N)$ and $I_{\lambda_n}(u_n)\to I(u_0)$. Moreover $u_0$ is a positive critical point of $I$, that is to say, $u_0$ is a positive solution of (\ref{eq1.1}).
\section{Proof of Theorem \ref{thm1.2}}

\indent\indent To prove Theorem \ref{thm1.2}, we apply the following Symmetric Mountain Pass Theorem due to Ambrosetti-Rabinowitz \cite{Rab1}.
\begin{Proposition}\label{pro4.1}
Let $X$ be an infinite dimensional Banach space, $X=Y\bigoplus Z$ with $\dim Y<+\infty$. If $J\in C^{1}(X,\mathbb{R})$ satisfies P-S condition, and\\
\indent (1)\ $J(0)=0$,  $J(-u)=J(u)$  for all $u\in X$;\\
\indent (2)\  there are constants $\rho,\alpha>0$ such that $J|_{\partial B_{\rho}\cap Z}\geq\alpha$;\\
\indent (3)\  for any finite dimensional subspace $W\subset X$, there is an $R=R(W)$ such that $J\leq0$ on $W\setminus B_{R(W)}$,\\
then $J$ has a sequence of critical values $c_j\to+\infty$ as $j\rightarrow\infty$.
\end{Proposition}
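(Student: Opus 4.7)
The plan is to apply Krasnoselskii's genus theory to construct an unbounded sequence of minimax critical values. Let $\Sigma$ denote the family of closed symmetric subsets of $X \setminus \{0\}$ and let $\gamma\colon \Sigma \to \mathbb{N} \cup \{\infty\}$ be the genus (the smallest $n$ admitting an odd continuous map $A \to \mathbb{R}^n \setminus \{0\}$). For each $j \in \mathbb{N}$ put $\Sigma_j := \{A \in \Sigma : A \text{ is compact and } \gamma(A) \geq j\}$ and define
$$c_j := \inf_{A \in \Sigma_j}\sup_{u \in A} J(u).$$
The claim is then that every $c_j$ is a critical value of $J$ and that $c_j \to +\infty$.

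First I would verify $c_j \in \mathbb{R}$ and $c_j \geq \alpha$ for every $j$ sufficiently larger than $\dim Y$. Finiteness is immediate from hypothesis $(3)$: for any $j$-dimensional subspace $W_j \subset X$ there is $R = R(W_j)$ with $J \leq 0$ on $W_j \setminus B_R$, and the sphere $\partial B_R \cap W_j$ lies in $\Sigma_j$ (its genus equals its dimension), bounding $c_j$ from above by a finite quantity. For the lower bound, the standard intersection property of the genus forces every $A \in \Sigma_j$, once $j$ is suitably larger than $\dim Y$, to meet $\partial B_\rho \cap Z$; hypothesis $(2)$ then yields $\sup_A J \geq \alpha$, so $c_j \geq \alpha$.

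Next, assuming $c_j$ is a regular value, I would derive a contradiction. Because $J$ is even and satisfies PS, the equivariant deformation lemma supplies $\epsilon > 0$ and an odd continuous map $\eta \in C(X,X)$ with $\eta(\{J \leq c_j + \epsilon\}) \subset \{J \leq c_j - \epsilon\}$. Choosing $A \in \Sigma_j$ with $\sup_A J \leq c_j + \epsilon$, the image $\eta(A)$ is still compact, closed, and symmetric, and $\gamma(\eta(A)) \geq \gamma(A) \geq j$ since odd continuous maps do not decrease the genus; thus $\eta(A) \in \Sigma_j$ yet $\sup_{\eta(A)} J \leq c_j - \epsilon$, contradicting the definition of $c_j$. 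The multiplicity case $c_j = c_{j+1} = \cdots = c_{j+p}$ is handled by the same deformation combined with the neighborhood stability of the genus.

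To conclude $c_j \to +\infty$, suppose by contradiction $c_j \uparrow c^* < \infty$. PS forces the critical set $K_{c^*}$ to be compact, so $m := \gamma(K_{c^*})$ is finite and there is a symmetric open neighborhood $U$ of $K_{c^*}$ with $\gamma(\overline{U}) = m$. An equivariant deformation avoiding $U$ pushes $\{J \leq c^* + \epsilon\} \setminus U$ into $\{J \leq c^* - \epsilon\}$, and genus subadditivity then forces every $c_j$ with $j > m$ to exceed $c^*$, a contradiction. The main obstacle throughout is the equivariant deformation lemma itself: one must antisymmetrize a standard pseudo-gradient, $V(u) \mapsto \frac{1}{2}(V(u) - V(-u))$, and verify that the resulting flow defines an odd descent homotopy respecting the level-set structure; once that machinery is in place, the genus bookkeeping in the preceding steps is routine.
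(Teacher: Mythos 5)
The paper does not prove this proposition at all: it is quoted from Rabinowitz \cite{Rab1} (Theorem~9.12 there), and the correct minimax classes reappear verbatim in the paper's Lemma~4.1. Measured against that standard proof, your proposal has a genuine gap at its very first substantive step. With the classes $\Sigma_j=\{A\ \text{compact, closed, symmetric},\ \gamma(A)\geq j\}$ the claimed lower bound $c_j\geq\alpha$ is simply false: for any $j$-dimensional subspace $W\subset X$ and any $R\geq R(W)$, the sphere $\partial B_R\cap W$ is compact, symmetric, has genus exactly $j$ by Borsuk--Ulam, and satisfies $J\leq 0$ on it by hypothesis $(3)$; hence $c_j\leq 0$ for every $j$. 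The ``standard intersection property of the genus'' you invoke only guarantees that a compact symmetric set of genus greater than $\dim Y$ meets the closed subspace $Z$ --- it does not force it to meet the \emph{sphere} $\partial B_\rho\cap Z$, and a large sphere in a finite-dimensional $W\supset Y$ meets $Z$ only at points of large norm, where $(2)$ gives no information. This is exactly why the plain genus minimax proves Clark-type multiplicity theorems for functionals bounded below, but not the symmetric mountain pass theorem, whose functional is unbounded below by $(3)$.

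The repair is the one Rabinowitz uses and the paper copies in Lemma~4.1: define
$$\Gamma_j:=\Bigl\{h\bigl((\overline{B_{R_m}}\cap E_m)\setminus Y\bigr):\ h\ \text{odd, continuous},\ h=\mathrm{id}\ \text{on}\ \partial B_{R_m}\cap E_m,\ m\geq j,\ Y=-Y,\ \gamma(Y)\leq m-j\Bigr\},$$
where $E_m$ is an $m$-dimensional subspace containing $Y\oplus(\text{an }(m-\dim Y)\text{-dimensional piece of }Z)$ and $R_m$ is taken from $(3)$ so that $J\leq 0$ on $E_m\setminus B_{R_m}$. For these classes an intersection lemma (Proposition~9.23 in \cite{Rab1}), proved by a degree-theoretic argument on $h$ restricted to the finite-dimensional ball rather than by genus monotonicity alone, yields $B\cap\partial B_\rho\cap Z_{j-1}\neq\emptyset$ for every $B\in\Gamma_j$ and $\rho<R_m$; this is what delivers $c_j\geq\alpha>0$, and a further comparison argument gives $c_j\to+\infty$. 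Your equivariant deformation step and the genus bookkeeping for repeated values are fine in themselves, but they must be run inside these restricted classes --- one has to check that the descent flow fixes the region $\{J\leq 0\}$ so that it maps $\Gamma_j$ into itself --- not over all compact symmetric sets of genus at least $j$.
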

\begin{Lemma}\label{lem4.1}
For any $\lambda\in(0,1]$ fixed, $I_{\lambda}$ has a sequence of critical points $\{u_{\lambda,j}\}_{j=1}^{\infty}$. Moreover for fixed $j$, there exist $\beta_j>\alpha_j$ (both of them are independent of $\lambda$) such that $I_{\lambda}(u_{\lambda,j})\in[\alpha_j,\beta_j]$ for all $\lambda\in(0,1]$.
\end{Lemma}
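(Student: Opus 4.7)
The plan is to invoke the symmetric mountain pass theorem (Proposition \ref{pro4.1}) in its Krasnoselskii genus form, constructing the minimax class in such a way that both the lower and upper bounds on the $j$-th critical value are manifestly independent of $\lambda \in (0,1]$. By $(V_1)$ (so $V$ is radial), $(a_4)$, and $(f_4)$, the functional $I_\lambda$ is even on $E$ with $I_\lambda(0)=0$, and for each $\lambda\in(0,1]$ it satisfies the Palais-Smale condition by Lemma \ref{lem3.1}. Fix once and for all an increasing sequence of $j$-dimensional subspaces $E_j \subset C^\infty_0(\mathbb{R}^N)\cap H^1_r(\mathbb{R}^N)$ with dense union, and an orthonormal system $\{e_k\}$ in $H^1_r(\mathbb{R}^N)$ compatible with it. Set $Z_{j-1} := \overline{\mathrm{span}}\{e_k : k\geq j\}$ and define
\[
c_j^\lambda = \inf_{A \in \mathcal{A}_j}\ \sup_{u \in A} I_\lambda(u),
\]
where $\mathcal{A}_j$ is the collection of closed symmetric subsets of $E\setminus\{0\}$ with genus at least $j$. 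Once the geometric hypotheses of Proposition \ref{pro4.1} are verified (uniformly in $\lambda$), the abstract theorem delivers critical points $u_{\lambda,j}$ of $I_\lambda$ at the levels $c_j^\lambda$.

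For the upper bound $\beta_j$, I would pick the admissible symmetric set $A_j = \partial B_{R_j}^{E_j}$, a $(j-1)$-sphere in $E_j$ having genus exactly $j$. Following the device of Lemma \ref{lem3.3}, introduce two $\lambda$-free majorants $J_1, J_2$ of $I_\lambda$ on $E$ (each obtained by fixing $\lambda=1$ and deleting one of the sign-indefinite perturbation terms), so that $I_\lambda(u) \leq \max\{J_1(u),J_2(u)\}$ for every $u\in E$ and every $\lambda\in(0,1]$. On the finite-dimensional space $E_j$ all norms are equivalent, and $(f_3)$ together with the genuinely super-quadratic behaviour of $F$ ensure that $\max\{J_1,J_2\}$ is bounded on $E_j \cap \overline{B_{R_j}}$ and negative on $E_j\setminus B_{R_j}$ for $R_j$ large enough. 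Consequently
\[
c_j^\lambda \ \leq\ \sup_{A_j} I_\lambda \ \leq\ \sup_{E_j \cap \overline{B_{R_j}}} \max\{J_1,J_2\} \ =:\ \beta_j<\infty,
\]
independently of $\lambda\in(0,1]$.

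For the lower bound $\alpha_j\to\infty$, the idea is to exploit improved Sobolev constants on the codimension-$(j-1)$ subspace $Z_{j-1}$. Define $\sigma_j^{(s)} = \sup\{|u|_s : u \in Z_{j-1},\ \|u\|_{2}=1\}$ for $s\in(2,\tfrac{4N}{N-2})$; by the compactness of the radial embedding $H^1_r(\mathbb{R}^N) \hookrightarrow L^s(\mathbb{R}^N)$, one has $\sigma_j^{(s)} \to 0$ as $j\to\infty$. Using $(f_1)$-$(f_2)$ to bound $\int F(u)\leq \delta |u|_2^2 + C_\delta |u|_q^q$, retaining the non-negative quasi-linear part via $(a_2)$, and recalling that $\lambda\in(0,1]$, one obtains for $u \in Z_{j-1}$ with $\|u\|_{2} = \rho$
\[
I_\lambda(u) \ \geq\ c_0\rho^2 - C(\sigma_j^{(q)})^q\rho^q - \tfrac{1}{r}(\sigma_j^{(r)})^r\rho^r.
\]
Optimising in $\rho$ yields a sphere radius $\rho_j$ and a value $\alpha_j > 0$, both tending to $+\infty$ as $j\to\infty$ and both independent of $\lambda$. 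A standard genus-intersection argument shows that every $A\in\mathcal{A}_j$ meets $Z_{j-1}\cap \partial B_{\rho_j}$, whence $c_j^\lambda \geq \alpha_j$.

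The main obstacle lies in the lower bound: one must arrange for the negative perturbation $-\tfrac{\lambda}{r}|u|_r^r$ (with $r>4$ super-quartic) to be dominated by the quadratic part uniformly in $\lambda\in(0,1]$ \emph{and} in a way that still allows $\alpha_j\to\infty$. This is precisely where the vanishing of the improved Sobolev constants $\sigma_j^{(r)}$ on $Z_{j-1}$ is essential, since it forces the radius $\rho_j$ maximising the right-hand side above to diverge with $j$ while keeping $\alpha_j$ positive. Verifying that the quasi-linear term $\int \sum a_{ij}(u)D_iuD_j u\, dx$, controlled by $(a_2)$-$(a_3)$, does not spoil either the uniform upper bound (via the $J_1,J_2$ comparison) or the lower bound (where it is simply discarded as nonnegative) is the technical core of the argument.
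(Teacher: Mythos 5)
Your upper bound is sound and is essentially the paper's own device: compare $I_\lambda$ with two $\lambda$-free majorants (the paper's $P_1,P_2$) and evaluate on a fixed finite-dimensional sphere. The lower bound, however, has two genuine gaps. First, the minimax class is wrong. If $\mathcal{A}_j$ consists of \emph{all} closed symmetric subsets of $E\setminus\{0\}$ of genus at least $j$, then the claimed intersection property fails: a sphere $\partial B_R\cap E_j$ with $R$ arbitrarily large is an admissible set of genus exactly $j$, it misses $Z_{j-1}\cap\partial B_{\rho_j}$ entirely, and by your own upper-bound computation $I_\lambda<0$ on it for $R$ large; hence $c_j^\lambda\leq\sup_{\partial B_R\cap E_j}I_\lambda<0$, contradicting $c_j^\lambda\geq\alpha_j>0$. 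The standard genus fact only guarantees that a set of genus $\geq j$ meets the codimension-$(j-1)$ subspace $Z_{j-1}$ \emph{somewhere}, not on a prescribed small sphere. To get the intersection with $\partial S_\rho\cap E_{j-1}^{\perp}$ one must use the Rabinowitz classes $\Gamma_j$ built from odd maps $H$ of the finite-dimensional balls $S_{R_n}\cap E_n$ that equal the identity on $\partial S_{R_n}\cap E_n$ (minus sets of controlled genus), which is exactly what the paper does.

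Second, the quantitative mechanism you propose for the lower bound is unavailable here. Since $r\in\left(\max\{4,q\},\frac{4N}{N-2}\right)$ and $\frac{4N}{N-2}=2\cdot 2^{*}$, in general $r>2^{*}=\frac{2N}{N-2}$ (always for $N\geq4$, and possibly for $N=3$), so $H^1_r(\mathbb{R}^N)$ does not embed into $L^r(\mathbb{R}^N)$ and your constant $\sigma_j^{(r)}=\sup\{|u|_r:u\in Z_{j-1},\ \|u\|_{2}=1\}$ is $+\infty$ rather than a quantity tending to $0$; the same may occur for $\sigma_j^{(q)}$. Control of $|u|_r$ requires the quasilinear term $\int_{\mathbb{R}^N}u^2|Du|^2\,\ud x$, which is why the paper works on the nonlinear sets $\partial S_\rho$ defined by $\int(|Du|^2+u^2)\,\ud x+2\int u^2|Du|^2\,\ud x=\rho^2$ and interpolates $|u|_r^r\leq|u|_2^{r\theta}|u|_{4N/(N-2)}^{r(1-\theta)}\leq C\rho^{r(1+\theta)/2}$. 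With that device the paper obtains only a positive lower bound $\alpha_j=C\rho_n^2>0$, which is all the lemma asserts; the divergence of the limiting levels is proved separately in the proof of Theorem \ref{thm1.2} by comparison with the auxiliary functional $\widetilde I$ and its critical values $d^j\to+\infty$. So your attempt to force $\alpha_j\to\infty$ via vanishing tail Sobolev constants is both unnecessary for the statement and not workable in this functional setting.
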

\begin{proof}
Basically we apply the Symmetric Mountain Pass Theorem due to Ambrosetti and Rabinowitz. Let $E_n$ be a $n$-dimensional subspace of $E$. Define
$$G_n=\{H\in C(S_{R_n}\cap E_{n},E):H\ \mbox{is odd and}\ H=id\ \mbox{on}\ \partial S_{R_n}\cap E_n\},$$
where the definition of $S_{R_{n}}$ can be found in Lemma \ref{lem3.2} and $R_n>0$ is chosen such that \\
$$I_{\lambda}(u)\leq\max\{ P_{1}(u),P_{2}(u)\}<0\ \mbox{for}\ u\in E_n\setminus S_{R_n},$$
where
\begin{equation*}
\begin{aligned}
P_{1}(u)&=\frac{1}{4}\int_{\mathbb{R}^N}|Du|^{4}\ud x+\frac{1}{2}\int_{\mathbb{R}^N}\sum\limits_{i,j=1}^{N}a_{ij}(u)D_{i}uD_{j}u\ud x+\frac{1}{2}\int_{\mathbb{R}^N}V_{1}u^{2}\ud x\\
&\indent-\int_{\mathbb{R}^N}F(u)\ud x+\frac{1}{2(1+\alpha)}\left(\int_{\mathbb{R}^N}u^{2}\ud x\right)^{1+\alpha}+\frac{1}{4}\int_{\mathbb{R}^N}u^{4}\ud x-\frac{1}{r}\int_{\mathbb{R}^N}|u|^{r}\ud x.
\end{aligned}
\end{equation*}
and
\begin{equation*}
\begin{aligned}
P_{2}(u)&=\frac{1}{4}\int_{\mathbb{R}^N}|Du|^{4}\ud x+\frac{1}{2}\int_{\mathbb{R}^N}\sum\limits_{i,j=1}^{N}a_{ij}(u)D_{i}uD_{j}u\ud x+\frac{1}{2}\int_{\mathbb{R}^N}V_{1}u^{2}\ud x\\
&\indent-\int_{\mathbb{R}^N}F(u)\ud x+\frac{1}{2(1+\alpha)}\left(\int_{\mathbb{R}^N}u^{2}\ud x\right)^{1+\alpha}.
\end{aligned}
\end{equation*}
Actually, such an $R_{n}$ can be found by the fact that in the proof of Lemma \ref{lem3.2} the element $\phi\in C^{\infty}_{0}(\mathbb{R}^N)$ is arbitrary. Note that $R_{n}$ does not depends on $\lambda$, that is to say, for all $\lambda\in(0,1]$
$$I_{\lambda}(u)<0,\ \mbox{for any}\ u\in E_{n}\cap\partial S_{R_n}.$$
Moreover, we assume $R_n>\rho_n$; see the following (\ref{eq4.2}) for the definition of $\rho_n$. Set
$$\Gamma_{j}:=\{H(S_{R_n}\cap E_n\setminus Y):H\in G_n,n\geq j,Y=-Y\subset S_{R_n}\cap E_n\ \mbox{open},\ \mbox{and}\ \gamma(Y)\leq n-j\},$$
where $\gamma(\cdot)$ is the genus. Define
$$c_{j}(\lambda)=\inf\limits_{B\in\Gamma_j}\sup\limits_{u\in B}I_{\lambda}(u),j=1,2,\cdot\cdot\cdot.$$
For any fixed $j$, define
$$\beta_{j}=\max\left\{\sup\limits_{u\in S_{R_n}\cap E_n}P_{1}(u),\sup\limits_{u\in S_{R_n}\cap E_n}P_{2}(u)\right\},\ j=1,2,\cdot\cdot\cdot.$$
Since $I_{\lambda}(u)\leq \max\{P_{1}(u),P_{2}(u)\}$ and $id\in G_n$, then we have
$$c_{j}(\lambda)=\inf\limits_{B\in\Gamma_j}\sup\limits_{u\in B}I_{\lambda}(u)\leq\sup\limits_{u\in S_{R_n}\cap E_n}I_{\lambda}(u)\leq\max\left\{\sup\limits_{u\in S_{R_n}\cap E_n}P_{1}(u),\sup\limits_{u\in S_{R_n}\cap E_n}P_{2}(u)\right\}=\beta_{j}.$$
Next we estimate the lower bound for $c_{j}(\lambda)$. By the intersection lemma in \cite{Rab1}, if $\rho<R_n$ for all $n\geq j$, then
for any $B\in\Gamma_{j}$, we have $$B\cap\partial S_{\rho}\cap E^{\perp}_{j-1}\neq\emptyset.$$
Hence
$$c_{j}(\lambda)\geq\inf\limits_{u\in\partial S_{\rho}\cap E^{\perp}_{j-1}}I_{\lambda}(u).$$
By condition $(f_1)$ and $(f_2)$, for any $\delta>0$ there exists $C=C_{\delta}>0$ such that $|F(u)|+\frac{1}{r}|u|^{r}\leq\delta u^{2}+C_{\delta}|u|^{r}$. Then for $\epsilon$ small and $u\in\partial S_{\rho}\cap E^{\perp}_{j-1}$, for any $1\leq j\leq n$,
\begin{equation*}
\begin{aligned}
I_{\lambda}(u)&\geq\frac{1}{2}\int_{\mathbb{R}^N}\sum\limits_{i,j=1}^{N}a_{ij}(u)D_{i}uD_{j}u\ud x+\frac{1}{2}\int_{\mathbb{R}^N}V(x)u^{2}\ud x\\
&\indent-\int_{\mathbb{R}^N}F(u)\ud x-\frac{1}{r}\int_{\mathbb{R}^N}|u|^{r}\ud x\\
&\geq\frac{1}{2}\int_{\mathbb{R}^N}\sum\limits_{i,j=1}^{N}a_{ij}(u)D_{i}uD_{j}u\ud x+\frac{1}{2}\int_{\mathbb{R}^N}(V(x)-\delta) u^{2}\ud x\\
&\indent-C_{\delta}\int_{\mathbb{R}^N}|u|^{r}\ud x\\
&\geq\frac{1}{2}\int_{\mathbb{R}^N}(1+2u^{2})|Du|^{2}\ud x+\frac{1}{4}\int_{\mathbb{R}^N}V_0u^{2}\ud x\\
&\indent-C\int_{\mathbb{R}^N}|u|^{r}\ud x\\
&\geq C\rho^{2}-C\rho^{\frac{3}{4}r}.
\end{aligned}
\end{equation*}
Since $r>4$, then let $\rho_{n}$ small enough such that for any $u\in\partial S_{\rho}\cap E^{\perp}_{j-1}$, we have
\begin{equation}\label{eq4.2}
I_{\lambda}(u)\geq C\rho_{n}^{2}>0.
\end{equation}
Finally, we apply Proposition \ref{pro4.1} to obtain that $c_{j}(\lambda)$, $j=1,2,\cdot\cdot\cdot$, are critical values of $I_{\lambda}$ and $c_{j}(\lambda)\to+\infty$ as $j\to+\infty$.
\end{proof}

\noindent{\bf The proof of Theorem \ref{thm1.2}.}
Due to the above lemma, for any $\lambda\in(0,1]$ fixed, $I_{\lambda}$ has a sequence of critical points $\{u_{\lambda,j}\}_{j=1}^{\infty}$ with $I_{\lambda}(u_{\lambda,j})=c_{\lambda}^{j}\leq\beta_j$.  Then from Lemma \ref{lem2.1}, it implies that $\lambda\int_{\mathbb{R}^N}\left(|Du_{\lambda,j}|^{4}+u_{\lambda,j}^{4}\right)\ud x\leq C$, $\int_{\mathbb{R}^N}(1+u_{\lambda,j}^{2})|Du_{\lambda,j}|^{2}\ud x\leq C$ and $\int_{\mathbb{R}^N}u_{\lambda,j}^{2}\ud x\leq C$. As $\lambda\to0^{+}$, using Proposition \ref{pro2.1}, we know that there exists a critical point $u_{0,j}\in H_{R}^{1}(\mathbb{R}^N)\cap L^{\infty}(\mathbb{R}^N)$ of $I$ such that $u_{\lambda,j}\to u_{0,j}$ in $H_{R}^{1}(\mathbb{R}^N)$, $u_{\lambda,j}Du_{\lambda,j}\to u_{0,j}Du_{0,j}$ in $L^{2}(\mathbb{R}^N)$, $\lambda\int_{\mathbb{R}^N}(|Du_{\lambda,j}|^{p}+|u_{\lambda,j}|^{p})\ud x\to0$, and $I_{\lambda}(u_{\lambda,j})\to I(u_{0,j})$. Since $c^{j}_{\lambda}=I_{\lambda}(u_{\lambda,j})\leq\beta_j$, we can assume $c_{\lambda}^{j}\to c_{\ast}^{j}$, as $\lambda\to0^{+}$.

 Notice that $c_{\ast}^{j}=I(u_{0,j})$. Next we claim that $c_{\ast}^{j}\to+\infty$ as $j\to\infty$. Indeed, we have
\begin{equation*}
\begin{aligned}
I_{\lambda}(u)&\geq\frac{1}{2}\int_{\mathbb{R}^N}\sum\limits_{i,j=1}^{N}a_{ij}(u)D_{i}uD_{j}u\ud x+\frac{1}{2}\int_{\mathbb{R}^N}V(x)u^{2}\ud x\\
&\indent-\int_{\mathbb{R}^N}F(u)\ud x-\frac{1}{r}\int_{\mathbb{R}^N}|u|^{r}\ud x\\
&\geq\frac{1}{2}\int_{\mathbb{R}^N}\sum\limits_{i,j=1}^{N}a_{ij}(u)D_{i}uD_{j}u\ud x+\frac{1}{2}\int_{\mathbb{R}^N}(V(x)-\delta) u^{2}\ud x\\
&\indent-C_{\delta}\int_{\mathbb{R}^N}|u|^{r}\ud x\\
&\geq\frac{1}{2}\int_{\mathbb{R}^N}(1+2u^{2})|Du|^{2}\ud x+\frac{1}{4}\int_{\mathbb{R}^N}V_0u^{2}\ud x\\
&\indent-C\int_{\mathbb{R}^N}|u|^{r}\ud x:=\widetilde{I}(u).
\end{aligned}
\end{equation*}
Since $r\in(4,\frac{4N}{N-2})$, by dual method in \cite{Col1,Shen1} and similar arguments as those in Lemma \ref{lem4.1}, it is easy to prove there exists a sequence of critical values $\{d^{j}\}$ of $\widetilde{I}(u)$, which can be defined by
$$d^{j}:=\inf\limits_{B\in \Gamma_{j}}\sup\limits_{u\in B}\widetilde{I}(u),$$
where $\Gamma_j$ is defined in Lemma \ref{lem4.1}. Moreover, Symmetric Mountain Pass Theorem (Proposition \ref{pro4.1}) implies that $d^{j}\to+\infty$ as $j\to+\infty$. From the definitions of $c_{\lambda}^{j}$ and $d^{j}$, we have $c_{\lambda}^{j}\geq d^{j}$. Finally, taking $\lambda\to0^{+}$ we get $c^{j}_{\ast}\geq d^{j}\to+\infty$ as $j\to+\infty$. So the original problem has infinitely many solutions. This ends the proof.\\


\begin{thebibliography}{30}
\bibliographystyle{plain}

\bibitem{Ada1} S. Adachi, T. Watanabe, Asymptotic properties of ground states of quasilinear Schr\"{o}dinger equations with $H^{1}$-subcritical exponent, {\it Adv. Nonlinear Studies}, {\bf12} (2012), 255-279.

\bibitem{Alv2} C. Alves, Y. Wang, Y. Shen, Soliton solutions for a class of quasilinear Schr\"{o}dinger equations with a parameter, {\it J. Differential Equations}, {\bf259}(2015), 318-343.

\bibitem{Bor1} A. Borovskii, A. Galkin, Dynamical modulation of an ultrashort high-intensity laser pulse in matter, {\it J. Exp. Theor. Phys.}, {\bf77} (1983), 562-573.

\bibitem{Bra1} H. Brandi, C. Manus, G. Mainfray, T. Lehner, G. Bonnaud, Relativistic and ponderomotive self-focusing of a laser beam in a radially inhomogeneous plasma, {\it Phys. Fluids},  {\bf5} (1993), 3539-3550.

\bibitem{Col1} M. Colin, L. Jeanjean, Solutions for a quasilinear Schr\"{o}dinger equations: A dual approach, {\it Nonlinear Anal.}, {\bf56} (2004), 213-226.

\bibitem{Gil1} D. Gilbarg, N. S. Trudinger, Elliptic Partial Differential Equations of Second Order, Classics Math., Springer, Berlin, 2001.

\bibitem{Hasse1} R. Hasse, A general method for the solution of nonlinear solution and kink Schr\"{o}dinger equations, {\it Z. Phys. B}, {\bf37} (1980), 83-87.

\bibitem{Huang1} C. Huang, G. Jia, Existence of positive solutions for supercritical quasilinear Schr\"{o}dinger elliptic equations, {\it J. Math. Anal. Appl.}, {\bf472} (2019), 705-727.

\bibitem{Huang2} C. Huang, G. Jia, Multiple solutions for a class of quasilinear Schr\"{o}dinger equations, {\it Complex Variables and Elliptic Equations}, {\bf66} (2021), 347-359.

\bibitem{Jing1} Y. Jing, Z. Liu, Z.-Q. Wang, Multiple solutions of a parameter-dependent quasilinear elliptic equation, {\it Calc. Var. Partial Differential Equations}, {\bf55} (2016), 150.

\bibitem{Jing2} Y. Jing, H. Liu, Sign-changing solutions for a modified nonlinear Schr\"{o}dinger equation in $\mathbb{R}^N$, {\it Calc. Var. Partial Differential Equations}, {\bf61} (2022), 144.

\bibitem{Kur1} S. Kurihura, Large-amplitude quasi-solitons in superfluids films, {\it J. Phys. Soc. Japan}, {\bf50} (1981), 3262-3267.

\bibitem{Laedke1}  E. Laedke, K. Spatschek, L. Steno, Evolution theorem for a class of perturbed envelope soliton solutions, {\it J. Math. Phys.}, {\bf24} (1983), 2764-2769.

\bibitem{Lange1} H. Lange, B. Toomire, P. Zweifel, Time-dependent dissipation in nonlinear Schr\"{o}dinger systems, {\it J. Math. Phys.}, {\bf36} (1995), 1274-1283.

\bibitem{Liu1} J. Liu, Y. Wang, Z.-Q. Wang, Soliton solutions for quasilinear Schr\"{o}dinger equations II, {\it J. Differential Equations}, {\bf187} (2003), 473-493.

\bibitem{Liu2} J. Liu, Y. Wang, Z.-Q. Wang, Solutions for quasilinear Schr\"{o}dinger equations via the Nehari Method, {\it Commun. Partial Differ. Equ.}, {\bf29} (2004), 879-901.

\bibitem{LiuJ1} J. Liu, Z.-Q. Wang, Y. Guo, Multibump solutions for quasilinear elliptic equations, {\it J. Funct. Anal.}, {\bf262} (2012), 4040-4102.

\bibitem{LiuJ2} J. Liu, X. Liu, Z.-Q. Wang, Sign-changing solutions for a parameter-dependent quasilinear equation, {\it Discrete Contin. Dyn. Syst. S.}, {\bf14} (2021), 1779-1799.

\bibitem{LiuX1} X. Liu, J. Liu, Z.-Q. Wang, Quasilinear elliptic equations via perturbation method, {\it Proc. Am. Math. Soc.}, {\bf141} (2013), 253-263.

\bibitem{LiuX2} X. Liu, J. Liu, Z.-Q. Wang, Multiple sign-changing solutions for quasilinear elliptic equations via perturbation method, {\it Commun. Partial Differ. Equ.}, {\bf39} (2014), 2216-2239.

\bibitem{LiuX3} X. Liu, J. Liu, Z.-Q. Wang, Quasilinear elliptic equations with critical growth via perturbation method, {\it J. Differential Equations}, {\bf254} (2013), 102-124.

\bibitem{LiuZ2} Z. Liu, Z.-Q. Wang, On Clark's theorem and its applications to partially sublinear problems, {\it Ann. Inst. H. Poincar\'{e} Anal. Non.  Lin\'{e}aire}, {\bf32} (2015), 1015-1037.

\bibitem{LiuZ3} Z. Liu, H. Luo, J. Zhang, Existence and multiplicity of bound state solutions to a Kirchhoff type equation with a general nonlinearity, {\it J. Geom. Anal.}, {\bf32} (2022), Paper No. 125, 25 pp.

\bibitem{LiuZ4} Z. Liu, Y. Lou, J. Zhang, A perturbation approach to studying sign-changing solutions of Kirchhoff equations with a general nonlinearity, {\it Ann. Mat. Pur. App.}, {\bf201} (2022), 1229-1255.

\bibitem{Makhankov1} V. Makhankov, V. Fedanin, Nonlinear effects in quasi-one-dimensional models of condensed matter theory, {\it Phys. Rep.}, {\bf104} (1984), 1-86.

\bibitem{Pop1} M. Poppenberg, K. Schmitt, Z.-Q. Wang, On the existence of soliton solutions to quasilinear Schr\"{o}dinger equations, {\it Calc. Var. Partial Differential Equations}, {\bf14} (2002), 329-344.

\bibitem{Rab1} P. Rabinowitz, Minimax Methods in Critical Points Theory with Application to Differential Equations. CBMS Regional Conf. Ser. Math. Vol. 65. Am. Math. Soc. Providence; (1986)

\bibitem{Shen1} Y. Shen, Y. Wang, Soliton solutions for generalized quasilinear Schr\"{o}dinger equations, {\it Nonlinear Anal.}, {\bf80} (2013), 194-201.

\bibitem{Sil1} E. Silva, G. Vieira, Quasilinear asymptotically periodic Schr\"{o}dinger equations with critical growth, {\it Calc. Var. Partial Differential Equations}, {\bf39} (2010), 1-33.

\bibitem{Xu1} L. Xu, H. Chen, Ground state solutions for quasilinear Schr\"{o}dinger equations via Pohozaev manifold in Orlicz space, {\it J. Differential Equations}, {\bf265} (2018), 4417-4441.

\bibitem{Wan1} Y. Wang, Multiplicity of solutions for singular quasilinear Schr\"{o}dinger equations with critical exponents, {\it J. Math. Anal. Appl.}, {\bf458} (2018), 1027-1043.

\bibitem{Zhang1} H. Zhang, Z. Liu, C. Tang, J. Zhang, Existence and multiplicity of sign-changing solutions for quasilinear Schr\"{o}dinger equations with sub-cubic nonlinearity, arXiv:2109.08810v1.
\end{thebibliography}
\end{document}